\newcommand{\R}{\mathbb{R}}
\renewcommand{\H}{\mathbb{H}}
\renewcommand{\epsilon}{\varepsilon}
\renewcommand{\theta}{\vartheta}
\renewcommand{\phi}{\varphi}
\renewcommand{\Re}{\mathrm{Re}}
\newcommand\abs[1]{\left\lvert#1\right\rvert}
\theoremstyle{plain}
\newtheorem{teor}{Theorem}
\newtheorem{lem}[teor]{Lemma}
\theoremstyle{plain}
\newtheorem*{rem}{Remark}
\theoremstyle{definition}
\title{A parabolic approach to the Calabi-Yau problem in HKT geometry}
\begin{document}
 
\thanks{This work was supported by GNSAGA of INdAM}
\subjclass[2010]{53C26, 32W20, 53E30}

\address{(Lucio Bedulli) Dipartimento di Ingegneria e Scienze dell'Informazione e Matematica, Universit\`a dell'Aquila,
via Vetoio, 67100 L'Aquila, Italy}
\email{lucio.bedulli@univaq.it}

\address{(Giovanni Gentili, Luigi Vezzoni) Dipartimento di Matematica G. Peano \\ Universit\`a di Torino\\
Via Carlo Alberto 10\\
10123 Torino\\ Italy}
\email{giovanni.gentili@unito.it\,,\quad luigi.vezzoni@unito.it}

\author{Lucio Bedulli, Giovanni Gentili and Luigi Vezzoni}

\date{\today}

\maketitle
\begin{abstract}
We consider the natural generalization of the parabolic Monge-Amp\`ere equation to HKT geometry.  
We prove that in the compact case the equation has always a short-time solution and when the hypercomplex structure is locally flat and admits a compatible hyperk\"ahler metric, then the equation has a long-time solution whose normalization converges to a solution of the quaternionic Monge-Amp\`ere equation first introduced in \cite{Alesker-Verbitsky (2010)}. The result gives an alternative proof of a theorem of Alesker in \cite{Alesker (2013)}.
\end{abstract}

\section{Introduction}
Let $M$ be a $4n$-dimensional smooth manifold. A {\em hypercomplex} structure on $M$ is a triple $(I,J,K)$ of complex structures satisfying the quaternionic identities
$$
IJ=-JI=K\,.
$$  
A Riemannian metric $g$ on $(M,I,J,K)$ is called {\em hyperhermitian} if it 
is compatible with each of $I,J,K$.  Every hyperhermitian metric induces the form
$$
\Omega=\omega_J+i\omega_K\in \Lambda_I^{2,0}\,,
$$
where $\omega_J=g(J\cdot,\cdot)$ and $\omega_K=g(K\cdot,\cdot)$ are the fundamental forms of $(g,J)$ and $(g,K)$ respectively. The form $\Omega$ is nondegenerate, i.e. $\Omega^n \neq 0$ everywhere, and determines $g$ by the relation
\begin{equation}\label{Omega_g}
\Omega(X,Y)=2g(JX,Y)
\end{equation}
for every $X,Y$ vector fields of type $(1,0)$ with respect to $I$.   
 
A hyperhermitian manifold $(M,I,J,K,g)$ is called HKT (hyperk\"ahler with torsion) if 
$$
\partial \Omega=0\,,
$$
where $\partial$ is with respect to $I$. HKT manifolds were first introduced by Howe and Papadopoulos in 	
\cite{Howe-Papadopoulos (2000)}, but the characterization in terms of the form $\Omega$ is due to Grantcharov and Poon \cite{Grantcharov-Poon (2000)} who also showed the existence of HKT structures on some homogeneous hypercomplex manifolds.  HKT structures have been studied intensively in the last years and many analogies to K\"ahler manifolds  have been discovered  (see e.g. \cite{Alesker (2013),Alesker-Shelukhin (2017),Alesker-Verbitsky (2006),Alesker-Verbitsky (2010),Banos,DinewSroka,GF,GentiliVezzoni,GLV,Grantcharov-Poon (2000),Ivanov,Sroka,Swann,Verbitsky (2002),Verbitsky (2007),Verbitsky (2009)} and the references therein). In particular a pluripotential theory has been developed in HKT geometry, according to which every HKT form $\Omega$ can be locally written as 
$$
\Omega=\partial \partial_J v
$$
for some real smooth function $v$  (see \cite{Alesker-Verbitsky (2006)} and \cite{Banos}), where $\partial_J=J^{-1}\bar \partial J$ and the action of $ J $ on a $ k $-form $ \alpha $ is defined as
\[
J\alpha(X_1,\dots,X_k)=\alpha(JX_1,\dots,JX_k)\,.
\]
Furthermore, in analogy with the complex case, the space of {\em quaternionic plurisubharmonic functions} has been introduced 
$$
\mathcal{H}_\Omega=\{\varphi \in C^{\infty}(M)\,\,:\,\, \Omega_\varphi:=\Omega+\partial	\partial_J	\varphi>0\}\,,
$$
where \lq\lq $\Omega_\varphi > 0$\rq\rq\, means that $\Omega_\varphi$ induces a hyperhermitian metric via \eqref{Omega_g}. 

In \cite{Alesker-Verbitsky (2010)} Alesker and Verbitsky introduced the following quaternionic version of the Calabi problem in analogy with the famous complex Calabi problem solved by Yau in \cite{yau}. Let $(M, I, J, K, g)$ be a compact HKT manifold and let $F\in C^{\infty}(M)$ be a smooth real valued function. The {\em quaternionic Monge-Amp\`ere equation} is
\begin{equation}\label{QMAe}
(\Omega+\partial \partial_J\varphi)^n=b\,{\rm e}^F\,\Omega^n\,,
\end{equation}
where $F\in C^{\infty}(M)$ is the datum, while $(\varphi,b)	\in	\mathcal{H}_\Omega \times 	\R_+  $  is the unknown.
Equation \eqref{QMAe} is most naturally settled if the canonical bundle of $(M,I)$ is holomorphically trivial. In this case there exists a holomorphic volume form $\Theta$ on $(M,I)$ which satisfies the {\em q-real} condition $J\Theta=\bar \Theta$  and $b$ is determined by $F$:
$$
b=\frac{\int_M\Omega^n\wedge \bar \Theta}{\int_M{\rm e}^F\Omega^n\wedge \bar \Theta}\,.
$$
So far there are only partial results about  the solvability of \eqref{QMAe}. In \cite{Alesker-Verbitsky (2010)} it is proved the uniqueness and an a priori $C^0$ estimate for solutions to \eqref{QMAe} when the canonical bundle of $(M,I)$ is holomorphically trivial; in \cite{Alesker-Shelukhin (2017),Sroka} the $C^0$ estimate is established
%under more general assumptions
on any compact HKT manifold, without further assumptions and in \cite{GentiliVezzoni} the problem is studied on some $8$-dimensional nilmanifolds. In \cite{Alesker (2013)} it is proved that equation \eqref{QMAe} can always be solved on HKT manifolds with {\em locally flat} hypercomplex structure which admit a compatible hyperk\"ahler metric. %in order to study the complex Monge-Amp\`ere equation.  
Here we recall that a hypercomplex structure is called {\em locally flat} if it is locally isomorphic to $\H^n$. Manifolds of this kind were firstly considered by Sommese in \cite{S} and simply called {\em quaternionic manifolds}. Recently in \cite{DinewSroka} the quaternionic Monge-Ampère equation has been solved on hyperk\"ahler manifolds without the assumption of local flatness.

\medskip 
In the present paper we approach equation \eqref{QMAe} via the following geometric flow    
\begin{equation}\label{QPMAe}
\varphi_t=\log	\frac{(\Omega+\partial\partial_J\varphi)^n}{\Omega^n}-F\,,\quad \varphi(x,0)=0\,,
\end{equation}
where the solution $\varphi$ is supposed to satisfy $\varphi(\cdot,t)\in \mathcal H_\Omega $ for every $t$ and the subscript $t$ denotes the derivative of $\varphi$ with respect to the variable $t$. 
The same dynamic approach was pursued on K\"ahler manifolds \cite{Cao}, on Hermitian manifolds \cite{gill,sun} and on almost Hermitian manifolds \cite{Chu2}.

Our main result is the following theorem which provides an alternative proof of Alesker's Theorem \cite{Alesker (2013)}.
\begin{teor}\label{main}
Let $(M,I,J,K,g)$ be a compact HKT manifold with $(I,J,K)$ locally flat and assume that there exists a hyperk\"ahler metric $\hat g$ on $(M,I,J,K)$. Then there exists a long-time solution $\varphi\in C^{\infty}(M\times\R_+)$ to the parabolic quaternionic Monge-Amp\`ere equation \eqref{QPMAe} such that  
$$
\tilde \varphi=\varphi-\int_M\varphi\, \Omega^n\wedge \bar{\Omega}_{\hat g}^n
$$
converges in $C^{\infty}$-topology to a smooth function $\tilde\varphi_{\infty}\in C^{\infty}(M)$. Moreover if 
$$
b:=\frac{\int_M\Omega^n\wedge \bar \Omega_{\hat g}^n}{\int_M{\rm e}^F\Omega^n\wedge  \bar \Omega_{\hat g}^n}\,,
$$ 
then $(\tilde\varphi_\infty,b)$ solves the 
quaternionic Monge-Amp\`ere equation \eqref{QMAe}.
\end{teor}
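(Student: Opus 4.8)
The plan is to run the parabolic argument of Cao \cite{Cao}, with the complex Monge--Amp\`ere theory replaced by its HKT analogue and with local flatness entering exactly where background curvature and torsion terms have to be absorbed. The linearization of the right-hand side of \eqref{QPMAe} at an admissible $\varphi$ is a Laplace-type second-order elliptic operator $\Delta_\varphi$ (with no zeroth-order term) for the metric $g_\varphi$ induced by $\Omega_\varphi$ via \eqref{Omega_g}; hence \eqref{QPMAe} is strictly parabolic and the standard theory gives a unique maximal smooth solution on $M\times[0,T_{\max})$ with $\varphi(\cdot,t)\in\mathcal H_\Omega$ for every $t$ (this is the short-time existence asserted at the outset). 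By parabolic Schauder theory it then suffices to establish \emph{a priori} bounds $\norma{\varphi}_{C^k(M\times[0,T])}\le C_k$ uniform in $T<T_{\max}$: these force $T_{\max}=\infty$, and together with a decay estimate they yield the asserted convergence.

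\emph{Zeroth order.} Differentiating \eqref{QPMAe} in time gives $\partial_t\varphi_t=\Delta_\varphi\varphi_t$, so the maximum principle together with $\varphi_t(\cdot,0)=-F$ yields $\norma{\varphi_t}_{C^0}\le\norma{F}_{C^0}$ for all $t$. Consequently, at each fixed time $\varphi(\cdot,t)$ solves the \emph{elliptic} quaternionic Monge--Amp\`ere equation $\Omega_\varphi^n=e^{F+\varphi_t}\Omega^n$ with exponent bounded independently of $t$, and the zeroth order estimate for this equation \cite{Alesker-Shelukhin (2017),Sroka} gives $\operatorname{osc}_M\varphi(\cdot,t)\le C$ uniformly in $t$; since $\tilde\varphi$ is obtained from $\varphi$ by subtracting its (normalized) $\Omega^n\wedge\bar\Omega_{\hat g}^n$-average, it follows that $\norma{\tilde\varphi}_{C^0}\le C$.

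\emph{Second order --- the key step.} We bound $\operatorname{tr}_{\hat g}g_\varphi$, equivalently the eigenvalues of $\Omega_\varphi$ relative to $\hat\Omega:=\Omega_{\hat g}$. Because $(I,J,K)$ is locally flat, its Obata connection is flat, and since $\hat g$ is hyperk\"ahler and compatible with $(I,J,K)$ the Levi--Civita connection of $\hat g$ coincides with the Obata connection; hence $\hat g$ is flat, and the Weitzenb\"ock identities for $\partial\partial_J$ computed in a local $\hat g$-parallel coframe carry no background curvature. Applying $\partial_t-\Delta_\varphi$ to $e^{-A\varphi}\operatorname{tr}_{\hat g}g_\varphi$ for $A$ large, using the concavity of $\log\det$ and the Monge--Amp\`ere equation to dominate the gradient terms, the maximum principle yields $\operatorname{tr}_{\hat g}g_\varphi\le C$; combined with $\Omega_\varphi^n=e^{F+\varphi_t}\Omega^n$ this gives a two-sided bound $C^{-1}\hat g\le g_\varphi\le C\hat g$ on $M\times[0,T_{\max})$, with $C$ depending only on $\norma{F}_{C^2}$ and the fixed geometry. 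I expect this to be the main obstacle: it is here that one must control the torsion of the Obata connection of $\Omega$ (which need not be hyperk\"ahler), and it is precisely local flatness that makes this possible, mirroring Alesker's elliptic argument \cite{Alesker (2013)}. Granted the two-sided bound, \eqref{QPMAe} is uniformly parabolic with concave operator, so the parabolic Evans--Krylov theorem gives a uniform parabolic $C^{2,\alpha}$ bound, and bootstrapping with parabolic Schauder estimates gives the uniform $C^k$ bounds above; in particular $T_{\max}=\infty$.

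\emph{Convergence.} The two-sided bound provides a uniform Poincar\'e inequality for the metrics $g_\varphi$; combining it with $\partial_t\varphi_t=\Delta_\varphi\varphi_t$ (for instance via a Gr\"onwall estimate for the $L^2$-oscillation of $\varphi_t$, or via the monotonicity of $\max_M\varphi_t$, $\min_M\varphi_t$ and a parabolic Harnack inequality) yields $\operatorname{osc}_M\varphi_t(\cdot,t)\le Ce^{-\delta t}$ for some $\delta>0$. Since $\hat g$ is hyperk\"ahler, $\bar\Omega_{\hat g}^n$ is $\partial$-closed and $\bar\Omega_{\hat g}^n\wedge(\partial\partial_J\,\cdot\,)$ integrates by parts as in \cite{Alesker-Verbitsky (2010)}, so $\int_M\Omega_\varphi^n\wedge\bar\Omega_{\hat g}^n$ is conserved along the flow; hence $\int_M e^{F+\varphi_t}\Omega^n\wedge\bar\Omega_{\hat g}^n=\int_M\Omega^n\wedge\bar\Omega_{\hat g}^n$ for all $t$, and letting $t\to\infty$, using $\operatorname{osc}_M\varphi_t\to0$, forces $\varphi_t\to\log b$ uniformly, with $b$ as in the statement. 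Therefore $\partial_t\tilde\varphi$ decays exponentially, $\tilde\varphi(\cdot,t)$ converges in $C^0$ to some $\tilde\varphi_\infty$, and interpolation with the uniform higher order bounds upgrades this to convergence in $C^\infty$; the bound $g_\varphi\ge C^{-1}\hat g$ passes to the limit, so $\tilde\varphi_\infty\in\mathcal H_\Omega$. Finally, passing to the limit in $\Omega_\varphi^n=e^{F+\varphi_t}\Omega^n$ gives $(\Omega+\partial\partial_J\tilde\varphi_\infty)^n=b\,e^F\,\Omega^n$, i.e. $(\tilde\varphi_\infty,b)$ solves \eqref{QMAe}.
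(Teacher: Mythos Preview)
Your outline follows the same five–step Cao scheme as the paper (bound on $\varphi_t$, oscillation via the elliptic $C^0$ estimate, Laplacian estimate, $C^{2,\alpha}$ plus bootstrapping, convergence), and the observation that local flatness together with the hyperk\"ahler background forces $\hat g$ to be flat is exactly what the paper exploits when it takes $\hat G\equiv\mathbbm{1}$ in local quaternionic coordinates. Two of your tactical choices differ from the paper's, and one step needs to be sharpened.

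For the Laplacian estimate you propose the Yau-type barrier $e^{-A\varphi}\operatorname{tr}_{\hat g}g_\varphi$, whereas the paper uses $Q=2\sqrt{\frac{1}{n}\operatorname{tr}_{\hat g}g_\varphi}-\varphi$; both lead, via the same third-order Cauchy--Schwarz inequality (cf.\ \cite[Proposition 3.1]{Alesker (2013)}), to the desired bound, so this is only a cosmetic difference. For the convergence you argue \`a la Cao/Gill, deducing exponential decay of $\operatorname{osc}_M\varphi_t$ from a Harnack inequality for $\partial_t\varphi_t=n\Delta_\varphi\varphi_t$ and identifying $b$ through the conserved quantity $\int_M\Omega_\varphi^n\wedge\bar\Omega_{\hat g}^n$. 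The paper instead follows Phong--Sturm: it introduces the Mabuchi-type functional $f(t)=\int_M\varphi_t\,\Omega_\varphi^n\wedge\bar\Theta$, shows $f'\le 0$ and $f''\ge Cf'$, extracts a convergent subsequence by Ascoli--Arzel\`a, and then appeals to \emph{uniqueness} of solutions of \eqref{QMAe} to upgrade subsequential to full convergence. Your route gives an explicit exponential rate and avoids the uniqueness theorem; the paper's route avoids the Harnack machinery.

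The point that is genuinely underspecified is the $C^{2,\alpha}$ step. The operator in \eqref{QPMAe} depends on $\varphi$ only through $\partial\partial_J\varphi$, i.e.\ through the projection ${\rm p}({\rm Hess}_{\R}\varphi)$ onto ${\rm Hyp}(n,\R)$, so the classical parabolic Evans--Krylov theorem does not apply directly: concavity and uniform ellipticity hold only on the projected Hessian. The paper handles this by rewriting the equation as $\frac{1}{\kappa}u_t=\Phi\bigl({\rm p}({\rm Hess}_{\R}u)\bigr)-\log\det(G)-F$ and invoking \cite[Theorem 5.1]{Chu}, which is precisely the Evans--Krylov-type estimate for operators of the form $\Phi\circ{\rm p}$ with ${\rm p}$ a linear projection satisfying ${\rm p}(N)\ge 0$ whenever $N\ge 0$. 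Your argument goes through once you replace ``parabolic Evans--Krylov'' by this result (or the elliptic analogue in \cite{Tosatti et al.}\ combined with a time-slicing as in \cite{Chu2}).
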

 
Recently we have been made aware of the paper \cite{Zhang} where the parabolic quaternionic Monge-Amp\`ere equation is studied and its long-time behaviour is described with techniques different from ours. 
%The same parabolic equation was considered independently in \cite{Zhang} whose main theorem is the same as ours. We were informed of this fact while the present work was being reviewed. However, the techniques we exploit here are considerably different, especially in what regards the $ C^{2,\alpha} $-estimate and the convergence of the flow....}

\medskip 
Now we describe the layout of the proof.
Since \eqref{QPMAe} is strongly parabolic, it admits a unique maximal solution $\varphi\in C^{\infty}(M\times [0,T))$.   
\begin{enumerate}
\item[Step 1.] From the equation we directly deduce a uniform $C^0$ bound on $\varphi_t$ (Lemma \ref{varphit}).

\vspace{0.2cm}
\item[Step 2.]  The $C^0$ estimate for solutions of the quaternionic Calabi-Yau equation \eqref{QMAe} then implies a uniform bound on ${\rm osc}\,\varphi$ (Lemma \ref{lemmatilde}).

\vspace{0.2cm}
\item[Step 3.]  We use the existence of the hyperk\"ahler metric and the local flatness of the hypercomplex structure in order to establish a uniform upper bound on $\Delta_{\hat g}\varphi$ (Lemma \ref{laplacian_estimate}). 

\vspace{0.2cm}
\item[Step 4.]  A general result in \cite{Chu} implies a uniform H\"older estimate on the second derivatives of $\varphi$, thus a classical bootstrapping argument using Schauder estimates implies $T=\infty$ and a uniform bound on $|\nabla^k\varphi|$ for $k\geq 1$ (Lemmas \ref{2nd_der} and \ref{schauder}). 

\vspace{0.2cm}
\item[Step 5.]  We prove the convergence of $\tilde \varphi$ using an argument due to Phong-Sturm \cite{PhongSturm} based on an adapted Mabuchi-type functional (Lemma \ref{phong}).  
\end{enumerate}
We point out that the local flatness of the hypercomplex structure plays a role in steps 3 and 4, while the existence of a background hyperk\"ahler metric is only used in step 3. 

\begin{rem}{\em 
Flow \eqref{QPMAe} can be regarded as a geometric flow in Hermitian Geometry. Here we assume that the canonical bundle of $(M,I)$ is trivial and we fix a $q$-real complex volume form $\Theta$ on $(M,I)$.
%We have a natural action of ${\rm SU}(2)$ on $\Lambda^p$ and,  
%following the notation in \cite{Alesker-Verbitsky (2010)}, for $p\leq \dim_{\mathbb C} M$ we denote by $\Lambda_+^p\subseteq \Lambda^p$ the maximal  ${\rm SU}(2)$-invariant subspace on which ${\rm SU}(2)$ acts with weight $p$. The Hodge bigrading is compatible with the weight decomposition of $\Lambda^p$ and gives the decomposition
%$$
%\Lambda^{p}_+\otimes \C=\bigoplus_{r+s=p}\Lambda^{r,s}_{+}\,.
%$$
%Moreover there is a canonical isomorphism $\tilde R\colon \Lambda^{r,s}_{+}\to \Lambda^{r+s,0}$ which induces the map $R\colon \Lambda^{r,s}\to \Lambda^{r+s,0}$ defined by 
%$R=\tilde R\circ \pi$, where $\pi\colon \Lambda^{r,s}\to \Lambda^{r,s}_+$ is the projection induced by the splitting $\Lambda^p=\Lambda^p_+\oplus V^p$, where $V^p$ is the sum of all ${\rm SU}(2)$-invariant subspaces of weight less then $p$. 
%Applying $\partial\partial_J$ to both sides of 	\eqref{QPMAe} we have  
%$$
%\partial\partial_J\varphi_t=\partial\partial_J \log	\frac{(\Omega+\partial\partial_J\varphi)^n}{\Omega^n}- \partial\partial_JF
%$$
%and then by applying $R$ we infer 
%$$
%\partial \bar\partial \varphi_t=\partial\bar\partial \log	\frac{(\Omega+\partial\partial_J\varphi)^n}{\Omega^n}- \partial\bar\partial F\,.
%$$
As shown in \cite{Alesker-Verbitsky (2010)} one has   
$$
(\Omega+\partial\partial_J\varphi)^n\wedge\bar\Theta=i^n(\omega-i\partial \bar\partial \varphi)^n\wedge \Phi\,,\quad \Omega^n\wedge\bar\Theta=i^n\omega^n\wedge \Phi 
$$
where $\omega$ is the fundamental form of $(g,I)$ and $\Phi$ is a real $(n,n)$-form which is positive in a weak sense. By setting $u=-\varphi$ we can then rewrite \eqref{QPMAe} as
\begin{equation}\label{fakeHE}
u_t=-\log	\frac{(\omega+i\partial \bar\partial u)^n\wedge\Phi}{\omega^n\wedge \Phi}+F\,,\quad u(0)=0\,. 
\end{equation}}

{\em Equation 	\eqref{fakeHE} reminds the {\em parabolic $k$-Hessian flow}
\begin{equation}\label{phongto}
u_t=\log	\frac{(\chi+i\partial \bar\partial u)^k\wedge\alpha^{n-k}}{\alpha^n}+F\,,\quad u(0)=0
\end{equation}
studied by Phong and T\^o on a complex $n$-dimensional Hermitian manifold $(M,\alpha)$ in \cite{PhongTo}, where $1\leq k\leq n$ and $\chi$ is real $k$-positive $(1,1)$-form. According to  \cite{PhongTo} \eqref{phongto} has always a long-time solution whose normalization converges in $C^\infty$-topology to a solution of the $k$-Hessian equation. 
%That gives an alternative proof of a theorem of \cite{}. 
Equation 
\eqref{fakeHE} differs from the  parabolic $n$-Hessian flow since the role of $\alpha^n$ is replaced by the form $\Phi$ which is positive in a weak sense and the theorem of Phong and T\^o cannot be directly applied.}
\end{rem}

\bigskip
\noindent {\bf Acknowledgments.} The authors are grateful to Gueo Grantcharov for useful conversations. The remark at the end of the introduction was generated by a question of Jeffrey Streets and a conversation with Misha Verbitsky and Marcin Sroka, we are very grateful to them for their interest in our paper.  

Moreover the authors would like to thank the anonymous referee who carefully read the first version of the present paper and made many very useful remarks which allowed them to considerably improve the presentation of the results.   

\section{Preliminaries}\label{preliminaries}
Let $(M,I,J,K,g)$ be a compact HKT manifold with HKT form $\Omega$. 
Let $\partial$ be the $\partial$-operator with respect to $I$ and $\partial_J:=J^{-1}\bar\partial J\colon \Lambda^{r,0}_I\to \Lambda^{r+1,0}_I$.  Then 
$$
\partial\partial_J=-\partial_J\partial\,,
$$
see \cite{Verbitsky (2002)}.
Moreover we assume  that the canonical bundle of $(M,I)$ is holomorphically trivial and we let $\Theta$ be 
a {\rm q}-real holomophic volume form on $(M, I)$. Note that, since $ \Omega $ is easily seen to be {\rm q}-real, $\Omega^n\wedge \bar \Theta$ is a real volume form; indeed, $J$ acts trivially on top forms and thus
$$
\overline{\Omega^n\wedge \bar \Theta}=J\Omega^n\wedge J\bar\Theta =\Omega^n\wedge \bar \Theta\,.
$$

 The HKT metric induces the {\em quaternionic Laplacian} operator 
$$
\Delta_g\varphi:=\frac{\partial \partial_J\varphi\wedge\Omega^{n-1}}{\Omega^n}
$$
for $\varphi\in C^{\infty}(M)$.  It is well-known that $\Delta_g$ is elliptic and it is 
straightforward to show that 
for $\eta,\psi\in C^{\infty}(M)$ we have 
$$
\int_M(\Delta_g\eta)\psi\,\Omega^n\wedge\bar \Theta  = 
\int_M\eta(\Delta_g\psi)\,\Omega^n\wedge\bar \Theta\,.
$$
Moreover the following formula will be useful: for every $\alpha, \beta \in \Lambda^{1,0}_I$
\begin{equation}\label{Lucio}
\frac{\alpha \wedge J\bar \beta \wedge \Omega^{n-1}}{\Omega^n}= - \frac{1}{2n} g(\alpha,\bar \beta)\,.
\end{equation}

\medskip 

The basic example of hyperhermitian manifold is given by an open set $A$ of $\mathbb R^{4n}$ with the standard hyperhermitian structure
$$
I_0=\begin{pmatrix}
0 & -\mathbbm{1}_n & 0 & 0\\
\mathbbm{1}_n & 0 & 0 & 0\\
0 & 0 & 0 &-\mathbbm{1}_n\\
0 & 0 & \mathbbm{1}_n & 0
\end{pmatrix}\,,\quad
J_0=\begin{pmatrix}
0 & 0 & -\mathbbm{1}_n & 0\\
0 & 0 & 0 & \mathbbm{1}_n\\
\mathbbm{1}_n & 0 & 0 & 0\\
0 & -\mathbbm{1}_n & 0 & 0
\end{pmatrix}\,,\quad
K_0=\begin{pmatrix}
0 & 0 & 0 & -\mathbbm{1}_n\\
0 & 0 & -\mathbbm{1}_n & 0\\
0 & \mathbbm{1}_n & 0 & 0\\
\mathbbm{1}_n & 0 & 0 & 0
\end{pmatrix}\,,
$$ 
where $ \mathbbm{1}_n $ is the $ n\times n $ identity matrix. In this case for an $\mathbb H$-valued function $u\colon \R^{4n}\to \mathbb H$ the following derivatives are defined     
$$
\partial_{q^r}u:=\partial_{x_0^r}u e_0-\sum_{i=1}^3 \partial_{x_i^r}u e_i \,,\quad 
\partial_{	\bar q^r}u:=\sum_{i=0}^3e_{i} \partial_{x_i^r}u \,,
$$
where to shorten the notation we denote the quaternions $1,i,j,k$ with $e_0,e_1,e_2,e_3$ and the coordinates on $\mathbb R^{4n}$ are taken as
$(x^1_{0},\dots,x^n_0,x^{1}_{1},\dots,x^n_1,x^1_2,\dots,x^n_2,x^1_3,\dots,x^{n}_{3})$ in  order to identify $\R^{4n}$ with $\mathbb H^n$. We denote by 
$$
{\rm Hyp}(n,\mathbb H)=\{U\in \mathbb H^{n,n}\,:\bar U=\,^tU\,\}
$$
the space of hyperhermitian matrices. Any $U\in {\rm Hyp}(n,\mathbb H)$ has real eigenvalues 
and we can consider the subset  ${\rm Hyp}^+(n,\mathbb H)$ of positive-definite hyperhermitian matrices.

Any hyperhermitian Riemannian metric $g$ on $(A,I_0,J_0,K_0)$ defines a smooth map $G\colon A\to {\rm Hyp}^{+}(n,\mathbb H)$,
$$
G_{rs}:=g(\partial_{q^r},\partial_{q^s})\,,
$$
where $g$ is extended $\mathbb H$-semilinearly in its components, i.e.
$$
g(X,Y\lambda)=g(X,Y)\lambda\,,\quad g(X\lambda ,Y)=\bar \lambda g(X,Y)
$$   
for every $\lambda\in\mathbb H$, $X,Y\in \Gamma(TA)$.

\medskip 
In some occasions we will make use of the following real representation of quaternionic matrices  
$\iota \colon\mathbb H^{n,n}\to \{U\in \R^{4n,4n}\,:I_0UI_0=J_0UJ_0=K_0UK_0=-U\,\}$\,,
$$
\iota(A+iB+jC+kD):=
\begin{pmatrix}
A  & B & C & D\\
-B & A & -D &C\\
-C & D & A &-B\\
-D & -C & B& A
\end{pmatrix}\,.
$$
The map $\iota$ is an isomorphism of real algebras and
$$
\iota({\rm Hyp}(n, \H)) = {\rm Hyp}(n, \R)\,,
$$
where $${\rm Hyp}(n, \R)= \{U\in \mathrm{Sym}(4n,\R)\,: I_0UI_0=J_0UJ_0=K_0UK_0=-U\,\}\,.$$

\medskip For any smooth function $u\colon A\to \mathbb R$ it is defined the {\em quaternionic Hessian matrix}  
$$
({\rm Hess}_{\mathbb H}u)_{rs}:=  u_{\bar r	s}\,,
$$
where we set $u_{\bar r s} = \partial_{\bar q^r}\partial_{ q^s} u$.
${\rm Hess}_{\mathbb H}u$ is a hyperhermitian quaternionic matrix, in particular the entries $({\rm Hess}_{\mathbb H}u)_{rr}$ are real.

\medskip  

The following lemma will be useful. We refer to \cite[Proposition 4.1]{Alesker-Verbitsky (2006)} for a proof (see also \cite{SThesis}).

\begin{lem}\label{fond}
Let $g$ be a HKT metric on $(A,I_0,J_0,K_0)$. Then the matrix associated to $g$ is 
$$
G=\kappa {\rm Hess}_{\mathbb H}u\,,
$$
where $u\in C^{\infty}(A,\R)$ is such that $\Omega=\partial\partial_J u$ is the HKT form associated to $g$ and $\kappa>0$ is a universal constant. 
\end{lem}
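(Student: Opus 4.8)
The statement is local and pointwise, so the plan is to fix the open set $A\subseteq\mathbb{H}^n$ with its standard flat structure $(I_0,J_0,K_0)$ and to compute both sides of $G=\kappa\,{\rm Hess}_{\mathbb H}u$ in adapted complex coordinates, checking that they coincide up to a universal constant. First I would introduce holomorphic coordinates $z^1,\dots,z^{2n}$ for $I_0$, compatible with the splitting $\mathbb{H}^n\ni q^r=z^r+z^{n+r}\,j$, so that $\Lambda^{1,0}_{I_0}$ is spanned by $dz^1,\dots,dz^{2n}$, and then record two pieces of elementary data: the action of $J_0$ on $(1,0)$- and $(0,1)$-forms in these coordinates (it interchanges the $r$-block with the conjugate of the $(n+r)$-block, up to signs), and the expressions of the quaternionic derivatives $\partial_{q^r}$, $\partial_{\bar q^r}$ as $\mathbb{H}$-valued combinations of the Wirtinger operators $\partial_{z^\mu}$, $\partial_{\bar z^\mu}$.

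Next, since $(I_0,J_0,K_0)$ is flat and $g$ is HKT, the pluripotential lemma recalled in the introduction lets us write $\Omega=\partial\partial_J u=\partial(J^{-1}\bar\partial u)$ for a real $u\in C^\infty(A)$. Expanding this in the coordinates $z^\mu$ produces an explicit antisymmetric $(2,0)$-form whose coefficient matrix is, after reindexing, a fixed linear expression in the entries $u_{\mu\bar\nu}=\partial_{z^\mu}\partial_{\bar z^\nu}u$ of the $I_0$-complex Hessian of $u$, arranged in the block pattern dictated by $J_0$. Using the normalization $\Omega(X,Y)=2g(JX,Y)$ for $(1,0)$-vectors I would then read off the metric coefficients from those of $\Omega$, and assemble the quaternionic matrix $G_{rs}=g(\partial_{q^r},\partial_{q^s})$ via the $\mathbb{H}$-semilinear extension of $g$ and the first-step formulas for $\partial_{q^r}$. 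The outcome is $G$ written as an $n$-independent linear expression in the $u_{\mu\bar\nu}$.

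Independently, plugging the formulas for $\partial_{q^r}$, $\partial_{\bar q^r}$ into $({\rm Hess}_{\mathbb H}u)_{rs}=\partial_{\bar q^r}\partial_{q^s}u$ and using that $u$ is real (so the purely holomorphic and purely anti-holomorphic second derivatives drop out of the relevant entries) yields the very same block matrix in the $u_{\mu\bar\nu}$, up to one overall constant $\kappa$. Carrying that constant through the computation, one sees that it depends only on the factor $2$ in $\Omega=2g(J\cdot,\cdot)$, on the $\frac12$'s in the Wirtinger operators, and on the fixed numerical coefficients coming from $J_0$; in particular it is a universal nonzero number, and tracking signs gives $\kappa>0$ (consistently, this forces ${\rm Hess}_{\mathbb H}u$ to be positive-definite, as it must be since $g$ is a Riemannian metric). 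Comparing the two expressions entry by entry then gives $G=\kappa\,{\rm Hess}_{\mathbb H}u$.

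The conceptual content here is minimal; the real obstacle is purely the bookkeeping. One must fix a mutually consistent set of conventions for the $J$-action on $(1,0)$-forms, the semilinear (rather than sesquilinear) extension of $g$, the quaternionic noncommutativity implicit in $\partial_{q^r}$ and $\partial_{\bar q^r}$, and the various normalization constants, and then verify that after all cancellations the same $\kappa$ appears in every entry with the correct sign. This is presumably why the authors quote the result from \cite{Alesker-Verbitsky (2006)} and \cite{SThesis} rather than reproving it in detail here.
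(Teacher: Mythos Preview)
The paper gives no proof of this lemma at all; it simply cites \cite[Proposition 4.1]{Alesker-Verbitsky (2006)} and \cite{SThesis}. Your sketch is a faithful outline of exactly the direct coordinate computation carried out in those references, and you correctly anticipate this at the end of your proposal. The one step worth flagging is your claim that ``the purely holomorphic and purely anti-holomorphic second derivatives drop out of the relevant entries'' of ${\rm Hess}_{\mathbb H}u$: for the diagonal entries this is immediate (they equal the real Laplacian in the four coordinates of $q^r$), but for the off-diagonal entries it is a genuine cancellation relying on the quaternionic multiplication table together with the equality of mixed partials, and should be checked rather than asserted. Once that is verified, the entrywise comparison goes through and the universal constant $\kappa$ falls out exactly as you describe.
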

Next we recall the formulation of the {\em quaternionic Monge-Amp\`ere equation} \eqref{QMAe} on open sets of $\mathbb H^n$;
for a hyperhermitian $U \in \mathbb{H}^{n,n}$ we will denote by $\det U$ its {\em Moore determinant} (see \cite{Moore}).
\begin{lem}\label{formule}
Let $g$ be a HKT metric on $(A,I_0,J_0,K_0)$, $\varphi\colon A\to \R$ a smooth function and $\Omega$ the HKT form of $g$, then 
$$
(\Omega+\partial \partial_J\phi)^n=\frac{\det(G+\kappa\,{\rm Hess}_{\mathbb H}\phi)}{\det G}\,\Omega^n\,,\quad 
n\frac{\partial\partial_J\varphi\wedge \Omega^{n-1}}{\Omega^n}=
\kappa\,\Re\left( {\rm tr}(G^{-1}{\rm Hess}_{\mathbb H}\phi)\right)\,.
$$ 
\end{lem}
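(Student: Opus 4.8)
The proof of Lemma \ref{formule} is essentially a local computation comparing the two natural "volume-type" functionals attached to an HKT form: the exterior power $\Omega^n$ on the one hand, and the Moore determinant of the associated hyperhermitian matrix $G$ on the other. The plan is to reduce everything to the model case $(A,I_0,J_0,K_0)$ and exploit Lemma \ref{fond}, which already tells us that $G=\kappa\,{\rm Hess}_{\mathbb H}u$ whenever $\Omega=\partial\partial_J u$. First I would record the key algebraic fact, due to Alesker, that for a hyperhermitian matrix $U$ the Moore determinant coincides (up to a fixed normalization) with the Pfaffian-type invariant extracted from the $(2,0)$-form $\sum_{r,s} U_{rs}\, dq^r\wedge dq^s$ under the $J$-twisted wedge calculus; concretely, if $\Omega_U:=\partial\partial_J(\text{potential of }U)$ then $\Omega_U^n = c_n \det(U)\, \Theta_0$ for a universal constant $c_n$ and the standard volume form $\Theta_0$ on $\mathbb H^n$. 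This is exactly the content of the Baston/Alesker correspondence between HKT forms and quaternionic Hessians, and it is the substitute for the classical identity $(\,i\partial\bar\partial u\,)^n = n!\det(u_{i\bar\jmath})\,(\tfrac{i}{2})^n dz\wedge d\bar z$.

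Granting that correspondence, the first identity in Lemma \ref{formule} is immediate: both $\Omega$ and $\Omega+\partial\partial_J\varphi$ are HKT-type forms (the latter need not be positive, but the algebraic identity is purely formal), so by Lemma \ref{fond} they correspond respectively to the hyperhermitian matrices $G=\kappa\,{\rm Hess}_{\mathbb H}u$ and $G+\kappa\,{\rm Hess}_{\mathbb H}\varphi=\kappa\,{\rm Hess}_{\mathbb H}(u+\varphi)$. Applying the determinant correspondence to numerator and denominator and cancelling the common factor $c_n\,\Theta_0$ yields
\[
(\Omega+\partial\partial_J\varphi)^n=\frac{\det(G+\kappa\,{\rm Hess}_{\mathbb H}\varphi)}{\det G}\,\Omega^n\,.
\]
The homogeneity of the Moore determinant ($\det(\lambda U)=\lambda^n\det U$ for real $\lambda$) shows the factors of $\kappa$ cancel correctly and the formula is independent of the choice of potential $u$, as it must be since the left-hand side is global.

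For the second identity, the idea is to differentiate the first one at $\varphi=0$, i.e. to linearize. Replacing $\varphi$ by $t\varphi$ and taking $\frac{d}{dt}\big|_{t=0}$, the left side becomes $n\,\partial\partial_J\varphi\wedge\Omega^{n-1}$, while the right side becomes $\frac{d}{dt}\big|_{t=0}\det(G+t\kappa\,{\rm Hess}_{\mathbb H}\varphi)/\det G$. The variation of the Moore determinant is governed by the analogue of Jacobi's formula; since $\det$ on hyperhermitian matrices is a real-valued polynomial and its gradient at $G$ in the direction of a hyperhermitian matrix $H$ is $\det(G)\,\Re\,{\rm tr}(G^{-1}H)$ — this is standard for the Moore determinant, the real part being forced because ${\rm tr}$ of a product of hyperhermitian matrices is generally quaternion-valued — one gets $\frac{d}{dt}\big|_{t=0}\det(G+t\kappa\,{\rm Hess}_{\mathbb H}\varphi)=\kappa\,\det(G)\,\Re\,{\rm tr}(G^{-1}{\rm Hess}_{\mathbb H}\varphi)$. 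Dividing by $\det G\cdot\Omega^n$ gives
\[
n\,\frac{\partial\partial_J\varphi\wedge\Omega^{n-1}}{\Omega^n}=\kappa\,\Re\big({\rm tr}(G^{-1}{\rm Hess}_{\mathbb H}\varphi)\big)\,,
\]
as claimed.

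I expect the main obstacle to be making the first bullet — the precise dictionary $\Omega_U^n=c_n\det(U)\,\Theta_0$ with the correct normalization and the correct sign/reality conventions for the Moore determinant — fully rigorous, since the $J$-action on forms, the definition of $\partial_J$, and the Moore determinant each carry conventions that must be matched. Once that identity and the Jacobi-type formula $D\det(G)[H]=\det(G)\,\Re\,{\rm tr}(G^{-1}H)$ are in hand, both statements follow by pure algebra and a one-parameter differentiation, with the universality of $\kappa$ inherited directly from Lemma \ref{fond}. It is worth noting that everything is local and pointwise, so compactness of $M$ plays no role here; the lemma is stated on an open set $A\subset\mathbb H^n$ precisely so that the hypercomplex structure is the flat model and Lemma \ref{fond} applies verbatim.
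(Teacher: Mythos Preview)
Your proposal is correct and follows the same route as the paper: the first identity is cited from \cite[Corollary 4.6]{Alesker-Verbitsky (2006)} and the second is obtained by linearizing the first at $\varphi=0$. The paper resolves precisely the obstacle you flag --- the Jacobi-type formula for the Moore determinant --- by passing to the real representation $\iota$ and using $\det\iota(U)=(\det U)^4$ together with $\tfrac14\,\mathrm{tr}\,\iota(\,\cdot\,)=\Re\,\mathrm{tr}(\,\cdot\,)$, which reduces the variation to the classical Jacobi formula for real symmetric matrices.
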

\begin{proof}
The first formula is \cite[Corollary 4.6]{Alesker-Verbitsky (2006)} and the second is simply obtained by linearizing the first one at the origin and using $ \det \iota (U)=(\det U)^4 $ for any hyperhermitian matrix $ U $ (see \cite[Theorem 2.4]{Alesker-Verbitsky (2006)}),
\[
\frac{d}{ds}\bigg \vert_{s=0} \log \frac{(\Omega+ \partial \partial_J (s\phi))^n}{\Omega^n}=\frac{d}{ds}\bigg \vert_{s=0}\log \frac{\det(G+\kappa \mathrm{Hess}_\H (s\phi))}{\det G}
\]
which gives
\[
\begin{split}
n\frac{\partial \partial_J \phi \wedge \Omega^{n-1}}{\Omega^n}&=\frac{d}{ds}\bigg \vert_{s=0}\log \frac{\det \iota \left( G+\kappa \mathrm{Hess}_\H (s\phi)\right)^{1/4}}{\det \iota (G)^{1/4}}=\frac{1}{4}\frac{d}{ds}\bigg \vert_{s=0}\log \frac{\det \left( \iota (G+\kappa \mathrm{Hess}_\H (s\phi))\right)}{\det \iota  (G)}\\
&= \frac{\kappa}{4} \mathrm{tr}\left( \iota (G)^{-1}\iota (\mathrm{Hess}_\H\phi)  \right)= \kappa \Re \left(\mathrm{tr}\left( G^{-1} \mathrm{Hess}_\H\phi  \right)\right)
\end{split}
\]
as claimed.
\end{proof}

Finally, we provide a lemma which will be helpful in the proof of the main theorem.

\begin{lem}\label{Deltalogdet}
Let $U\colon A\to {\rm Hyp}^+(n,\mathbb H)$ be a smooth map   and assume that there exists $p\in A$ such that $U(p)$ is diagonal. Let $\hat g$ be a hyperhermitian metric on $A$ such that the induced matrix $\hat G$ is the identity. 
Then  
$$
\Delta_{\hat g}\log(\det U)=-\frac{\kappa}{n}\sum_{r,s,t=1}^n\sum_{i=0}^3 \frac{1}{U_{ss}} \frac{1}{U_{tt}} |U_{st,x^r_i} |^2  +\sum_{s=1}^n\frac{1}{U_{ss}} \Delta_{\hat g} U_{ss}
$$ 
at $p$, where the subindex \lq\lq $x^r_{i}$\rq\rq denotes the derivative with respect to the corresponding real coordinate. 
\end{lem}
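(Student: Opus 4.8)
The plan is to reduce the identity to a pointwise statement about the ordinary determinant of a real symmetric matrix, via the real representation $\iota$, after first recognizing that under the normalization $\hat G=\mathbbm 1$ the operator $\Delta_{\hat g}$ is just a constant multiple of the Euclidean Laplacian of $\H^n$. Combining the definition of $\Delta_{\hat g}$ with the linearized formula in Lemma~\ref{formule} gives, for every $\psi\in C^{\infty}(A,\R)$, the identity $n\,\Delta_{\hat g}\psi=\kappa\,\Re\,\mathrm{tr}(\mathrm{Hess}_{\H}\psi)$, since $\hat G=\mathbbm 1$. A direct computation from the definitions of $\partial_{q^r}$ and $\partial_{\bar q^r}$ shows $(\mathrm{Hess}_{\H}\psi)_{rr}=\sum_{i=0}^3\partial_{x^r_i}^2\psi$ (consistently with the already noted fact that these entries are real), so
\[
\Delta_{\hat g}\psi=\frac{\kappa}{n}\sum_{r=1}^n\sum_{i=0}^3\partial_{x^r_i}^2\psi\,.
\]
In particular $\Delta_{\hat g}U_{ss}=\frac{\kappa}{n}\sum_{r,i}\partial_{x^r_i}^2U_{ss}$, and the assertion of the lemma becomes equivalent to the pointwise equality, at $p$ and for each fixed real coordinate $v=x^r_i$,
\[
\partial_v^2\log(\det U)=\sum_{s=1}^n\frac{1}{U_{ss}}\,\partial_v^2 U_{ss}-\sum_{s,t=1}^n\frac{1}{U_{ss}U_{tt}}\,|U_{st,v}|^2\,,
\]
after summing over $v$ and multiplying by $\kappa/n$.

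To prove this pointwise identity I would pass to the real representation. Since $\det\iota(V)=(\det V)^4$ for hyperhermitian $V$, we have $\log\det U=\tfrac14\log\det\iota(U)$, and $\iota(U)$ is a positive definite (hence invertible) real $4n\times 4n$ matrix depending smoothly on the coordinates. For such a matrix-valued function the classical identity
\[
\partial_v^2\log\det H=\mathrm{tr}\!\left(H^{-1}\partial_v^2 H\right)-\mathrm{tr}\!\left(H^{-1}\partial_v H\,H^{-1}\partial_v H\right)
\]
holds with $H=\iota(U)$. As $\iota$ is an isomorphism of real algebras it commutes with real-linear differentiation and with inversion, so $\partial_v\iota(U)=\iota(\partial_v U)$ and $\iota(U)^{-1}=\iota(U^{-1})$, whence the two terms above equal $\mathrm{tr}\,\iota(U^{-1}\partial_v^2 U)$ and $\mathrm{tr}\,\iota(U^{-1}\partial_v U\,U^{-1}\partial_v U)$. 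Using $\mathrm{tr}\,\iota(B)=4\,\Re\,\mathrm{tr}_{\H}(B)$, the factor $4$ cancels the $\tfrac14$ and one obtains
\[
\partial_v^2\log(\det U)=\Re\,\mathrm{tr}_{\H}\!\left(U^{-1}\partial_v^2 U\right)-\Re\,\mathrm{tr}_{\H}\!\left(U^{-1}\partial_v U\,U^{-1}\partial_v U\right)\,.
\]

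Finally I would evaluate at $p$. There $U$ is diagonal with real positive entries, so $U^{-1}=\mathrm{diag}(U_{11}^{-1},\dots,U_{nn}^{-1})$ and the first trace collapses to $\sum_s U_{ss}^{-1}\partial_v^2 U_{ss}$ (the diagonal entries of $\partial_v^2 U$ are real). In the second trace, expanding the two products and using that the scalars $U_{ss}^{-1}$ are real and central, one gets $\sum_{s,t}U_{ss}^{-1}U_{tt}^{-1}(\partial_v U)_{st}(\partial_v U)_{ts}$; since $\partial_v U$ is again hyperhermitian, $(\partial_v U)_{ts}=\overline{(\partial_v U)_{st}}$ and hence $(\partial_v U)_{st}(\partial_v U)_{ts}=|(\partial_v U)_{st}|^2=|U_{st,v}|^2$. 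This proves the pointwise equality above. Summing over $v=x^r_i$ for $1\le r\le n$, $0\le i\le 3$, multiplying by $\kappa/n$, and recognizing $\frac{\kappa}{n}\sum_{r,i}\partial_{x^r_i}^2 U_{ss}=\Delta_{\hat g}U_{ss}$ yields the claimed formula.

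I do not expect a genuine obstacle: the content is simply that, after applying $\iota$, the Moore determinant is an honest determinant, so the usual Kähler-geometry computation of $\Delta\log\det$ goes through verbatim. The only points requiring care are bookkeeping: keeping track of the factor $4$ produced jointly by $\det\iota(\cdot)=(\det\,\cdot\,)^4$ and $\mathrm{tr}\circ\iota=4\,\Re\,\mathrm{tr}_{\H}$, and checking that the non-commutativity of $\H$ does no harm — which it does not, since $\iota$ is an algebra homomorphism and the weights $U_{ss}^{-1}$ appearing after diagonalization are real.
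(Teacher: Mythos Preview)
Your proof is correct and follows essentially the same route as the paper: both arguments pass to the real representation $\iota$, use $\det\iota(U)=(\det U)^4$ together with the standard second-derivative formula for $\log\det$ of a real matrix, pull back via the algebra-homomorphism property of $\iota$ and the identity $\mathrm{tr}\circ\iota=4\,\Re\,\mathrm{tr}_{\H}$, and finally evaluate at the point where $U$ is diagonal. The only cosmetic difference is that you first reduce $\Delta_{\hat g}$ to $\tfrac{\kappa}{n}\sum_{r,i}\partial_{x^r_i}^2$ via Lemma~\ref{formule}, whereas the paper computes $\partial_{\bar q^r}\partial_{q^r}\log(\det U)$ directly; this is the same computation.
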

\begin{proof}
Since $ \det \iota (U)=(\det U)^4 $ we directly compute
\[
\begin{aligned}
\partial_{\bar q^r}\partial_{q^r} \log(\det U)&=\sum_{i=0}^3\partial_{x^r_i}^2 \log(\det U)=\frac{1}{4}\sum_{i=0}^3\partial_{x^r_i}^2 \log(\det \iota(U))=\frac{1}{4}\sum_{i=0}^3\partial_{x^r_i} \mathrm{tr}\left( \iota(U)^{-1} \iota(U)_{,x^r_i} \right)\\
&=\frac{1}{4}\sum_{i=0}^3 \mathrm{tr}\left( -\iota(U)^{-1}\iota(U)_{,x^r_i}\iota(U)^{-1} \iota(U)_{,x^r_i}+ \iota(U)^{-1}\iota(U)_{,x^r_ix^r_i} \right)\\
&=\frac{1}{4}\sum_{i=0}^3 \mathrm{tr}\left(\iota \left( -U^{-1}U_{,x^r_i}U^{-1} U_{,x^r_i}+ U^{-1}U_{,x^r_ix^r_i}\right) \right)\\
&=\sum_{i=0}^3 \Re \left(\mathrm{tr}\left( -U^{-1}U_{,x^r_i}U^{-1} U_{,x^r_i}+ U^{-1}U_{,x^r_ix^r_i} \right)\right)
\end{aligned}
\]
and at the point $p$ where $U$  takes a diagonal form
\[
\begin{aligned}
\Delta_{\hat g} \log(\det U)&=\frac{\kappa}{n}\sum_{r,s,t=1}^n \sum_{i=0}^3 \Re \left(-U^{ss}U_{st,x^r_i}U^{tt} U_{ts,x^r_i}+ U^{ss}U_{ss,x^r_ix^r_i} \right)\\
%&=\frac{\kappa}{n} \sum_{r,s,t=1}^n \sum_{i=0}^3 \left(-\frac{1}{U_{ss}}\frac{1}{U_{tt}}|U_{st,x^r_i}|^2+ \frac{1}{U_{ss}}U_{ss,x^r_ix^r_i}\right)\\
&=-\frac{\kappa}{n}\sum_{r,s,t=1}^n \sum_{i=0}^3 \frac{1}{U_{ss}}\frac{1}{U_{tt}}|U_{st,x^r_i}|^2+ \sum_{s=1}^n\frac{1}{U_{ss}}\Delta_{\hat g}U_{ss}
\end{aligned}
\]
and the claim follows.
\end{proof}

\section{Proof of the Result}
Let $(M,I,J,K,g)$ be a HKT manifold with HKT form $\Omega$. Every $\varphi \in \mathcal{H}_\Omega$ induces a HKT metric $g_\varphi$ and a quaternionic Laplacian $\Delta_\varphi:=\Delta_{g_\varphi}$. 
Consider the operator
$$
P\colon \mathcal{H}_\Omega \to C^{\infty}(M)\,,\quad  P(\varphi)=\log	\frac{(\Omega+\partial\partial_J\varphi)^n}{\Omega^n}-F \,.
$$
The first variation of $P$ is  
$$
P_{*|\varphi}(\psi)=n\frac{\partial\partial_J\psi\wedge (\Omega+\partial\partial_J\varphi)^{n-1}}{(\Omega+\partial\partial_J\varphi)^{n}}=n\Delta_\varphi\psi\,.
$$
Since $\Delta_{\varphi}$ is a strongly elliptic operator, equation \eqref{QPMAe} is always well-posed and it admits a unique maximal solution $\varphi\in C^{\infty}(M\times\mathbb[0,T))$. Assume further that the canonical bundle of $(M,I)$ is holomorphically trivial and let $\Theta\in\Lambda^{2n,0}_I$ be a q-real holomorphic volume form. 
We then denote
$$
\tilde\varphi= \varphi-\int_M\varphi\,\Omega^n\wedge \bar\Theta\,. 
$$
We start by proving $ C^0 $ bounds for the time derivatives $ \phi_t $ and $ \tilde \phi_t $ and then use these to prove the $ C^0 $ estimate for $ \tilde \phi $. In what follows we denote by $C$ all the uniform constants (which may be different from line to line). 

\begin{lem}\label{varphit}
There exists a uniform constant $C$ such that 
$$
|\varphi_t(x,t)|\leq C\,,\quad |\tilde \varphi_t(x,t)|\leq C\,
$$
for every $(x,t)\in M\times [0,T)$. 
\end{lem}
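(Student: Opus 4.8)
The plan is to observe that the time-derivative $\varphi_t$ satisfies a linear, homogeneous, parabolic equation and then apply the parabolic maximum principle. First I would differentiate the flow equation \eqref{QPMAe} with respect to $t$: since $\varphi_t = P(\varphi)$ and the first variation of $P$ at $\varphi$ is $n\Delta_\varphi$, the chain rule gives
$$
\partial_t\varphi_t \;=\; P_{*|\varphi}(\varphi_t)\;=\; n\Delta_{\varphi(t)}\varphi_t\,,\qquad\text{i.e.}\qquad \bigl(\partial_t-n\Delta_{\varphi(t)}\bigr)\varphi_t=0 \ \text{ on } M\times[0,T)\,.
$$
Because $\varphi(\cdot,t)\in\mathcal H_\Omega$ for every $t$, each operator $\Delta_{\varphi(t)}$ is strongly elliptic and purely second order (locally $n\Delta_{\varphi(t)}=\kappa\,\Re\,\mathrm{tr}(G_{\varphi(t)}^{-1}\mathrm{Hess}_\H(\cdot))$ by Lemma \ref{formule}, with $G_{\varphi(t)}$ positive definite), so $\partial_t-n\Delta_{\varphi(t)}$ is genuinely of heat-operator type, with no zeroth-order term.

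Next I would apply the parabolic maximum principle on $M\times[0,T']$ for an arbitrary $T'<T$ (legitimate since $\varphi\in C^\infty(M\times[0,T))$, so $\varphi_t$ is continuous up to $t=0$): a solution $v$ of $(\partial_t-n\Delta_{\varphi(t)})v=0$ on the closed manifold $M$ attains its spatial-temporal maximum and minimum at $t=0$, hence
$$
\min_M\varphi_t(\cdot,0)\ \le\ \varphi_t(x,t)\ \le\ \max_M\varphi_t(\cdot,0)\qquad\text{for all }(x,t)\in M\times[0,T)\,.
$$
Evaluating \eqref{QPMAe} at $t=0$ and using $\varphi(\cdot,0)\equiv 0$ gives $\varphi_t(\cdot,0)=\log(\Omega^n/\Omega^n)-F=-F$, so the estimate becomes $-\max_M F\le\varphi_t\le-\min_M F$ and therefore $|\varphi_t(x,t)|\le\|F\|_{C^0(M)}$, a uniform bound.

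Finally, for $\tilde\varphi_t$ I would differentiate $\tilde\varphi=\varphi-\int_M\varphi\,\Omega^n\wedge\bar\Theta$ under the integral sign, obtaining $\tilde\varphi_t=\varphi_t-\int_M\varphi_t\,\Omega^n\wedge\bar\Theta$; since $\Omega^n\wedge\bar\Theta$ is a fixed finite measure on $M$ and $|\varphi_t|\le\|F\|_{C^0}$ pointwise, the integral term is bounded by $\|F\|_{C^0}\int_M\Omega^n\wedge\bar\Theta$, so $|\tilde\varphi_t|\le\bigl(1+\int_M\Omega^n\wedge\bar\Theta\bigr)\|F\|_{C^0}$. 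I do not expect a real obstacle here: the only things to be careful about are the correct sign/ellipticity of $n\Delta_{\varphi(t)}$ — so that the relevant comparison principle is the one for $\partial_t-\Delta$ — and the fact that $\Delta_{\varphi(t)}$ is well defined along the whole flow, which is exactly the standing hypothesis $\varphi(\cdot,t)\in\mathcal H_\Omega$.
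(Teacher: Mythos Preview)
Your proposal is correct and follows essentially the same approach as the paper: derive $\varphi_{tt}=n\Delta_\varphi\varphi_t$ and apply the parabolic maximum principle to bound $\varphi_t$, then deduce the bound on $\tilde\varphi_t$ from the bound on $\varphi_t$. You simply supply more detail than the paper does, including the explicit initial value $\varphi_t(\cdot,0)=-F$ and the resulting constant $\|F\|_{C^0}$.
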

\begin{proof}
Since 
$$
\frac{\partial}{\partial t} \log	\frac{(\Omega+\partial\partial_J\varphi)^n}{\Omega^n}=n
\frac{\partial\partial_J\varphi_t\wedge \Omega^{n-1}_\varphi}{\Omega^{n}_\phi}=n\Delta_\varphi\varphi_t\,,
$$
we have 
$$
\varphi_{tt}=n\Delta_\varphi\varphi_t
$$
and the parabolic maximum principle implies the a priori $C^{0}$ estimate for $\varphi_t$.  The estimate on $\tilde\varphi_t$ immediately follows.  
\end{proof}

\begin{lem}\label{lemmatilde}
We have 
$$
\max_M \varphi-\min_M \varphi\leq C
$$
and 
$$
|\tilde \varphi|\leq C\,,
$$
for a uniform constant $C$.
\end{lem}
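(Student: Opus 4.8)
The plan is to freeze the time variable: at each fixed $t$ the flow \eqref{QPMAe} is nothing but the elliptic quaternionic Monge-Amp\`ere equation \eqref{QMAe} with a right-hand side whose $C^0$ norm is uniformly controlled, so the known $C^0$ estimate for the quaternionic Calabi-Yau equation applies with a constant independent of $t$.

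First I would rewrite \eqref{QPMAe}, at a fixed $t\in[0,T)$, in the form
\[
(\Omega+\partial\partial_J\varphi(\cdot,t))^n = {\rm e}^{\varphi_t(\cdot,t)+F}\,\Omega^n\,,
\]
so that $\varphi(\cdot,t)\in\mathcal H_\Omega$ solves \eqref{QMAe} with datum $F_t:=\varphi_t(\cdot,t)+F$ (and $b=1$; the compatibility $\int_M{\rm e}^{F_t}\Omega^n\wedge\bar\Theta=\int_M\Omega^n\wedge\bar\Theta$ holds automatically, since $\partial\partial_J\varphi$ is both $\partial$- and $\partial_J$-closed and an integration by parts against the $\partial$-closed form $\Omega^{n-1}\wedge\bar\Theta$ shows $\int_M\Omega_\varphi^n\wedge\bar\Theta$ is independent of $\varphi$). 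By Lemma \ref{varphit} one has $\|F_t\|_{C^0}\le\|\varphi_t\|_{C^0}+\|F\|_{C^0}\le C$ uniformly in $t$. I would then invoke the a priori $C^0$ estimate for the quaternionic Calabi-Yau equation (\cite{Alesker-Verbitsky (2010)}, and \cite{Alesker-Shelukhin (2017),Sroka} for general compact HKT manifolds): the oscillation of a solution of \eqref{QMAe} is bounded in terms of the fixed data $(M,I,J,K,\Omega,\Theta)$ and of the $C^0$ norm of the datum only. Combined with the bound on $\|F_t\|_{C^0}$ this gives
\[
\max_M\varphi(\cdot,t)-\min_M\varphi(\cdot,t)\le C\,,\qquad t\in[0,T)\,,
\]
with $C$ independent of $t$.

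To pass to the bound on $\tilde\varphi$, note that subtracting a (time-dependent) constant leaves the oscillation unchanged, so ${\rm osc}_M\,\tilde\varphi(\cdot,t)={\rm osc}_M\,\varphi(\cdot,t)\le C$. Moreover $\Omega^n\wedge\bar\Theta$ is a positive volume form on $M$, normalized so that $\int_M\Omega^n\wedge\bar\Theta=1$; hence $\int_M\tilde\varphi(\cdot,t)\,\Omega^n\wedge\bar\Theta=0$, which forces $\min_M\tilde\varphi(\cdot,t)\le 0\le\max_M\tilde\varphi(\cdot,t)$, and therefore $|\tilde\varphi(\cdot,t)|\le{\rm osc}_M\,\tilde\varphi(\cdot,t)\le C$. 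This is the second assertion.

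I do not expect any serious obstacle here: the entire argument is a reduction to the already available elliptic $C^0$ estimate. The only point requiring care is the uniformity in $t$ of the constant it provides, and this is harmless because that constant depends on the equation only through the $C^0$ norm of the datum, which is exactly the quantity controlled uniformly in $t$ by Lemma \ref{varphit}, all remaining input being fixed. In particular, the local flatness of $(I,J,K)$ and the geometric content of the background hyperk\"ahler metric are not used at this step.
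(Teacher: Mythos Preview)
Your proposal is correct and follows essentially the same route as the paper: freeze $t$, view $\varphi(\cdot,t)$ as a solution of \eqref{QMAe} with datum $F+\varphi_t$, invoke the elliptic $C^0$ (oscillation) estimate with the uniform bound on $\|\varphi_t\|_{C^0}$ from Lemma~\ref{varphit}, and then pass from the oscillation bound to a bound on $|\tilde\varphi|$ using that $\tilde\varphi(\cdot,t)$ has zero average. The paper phrases the last step as ``there exists $y$ with $\tilde\varphi(y,t)=0$, hence $|\tilde\varphi(x,t)|=|\varphi(x,t)-\varphi(y,t)|\le\mathrm{osc}\,\varphi$'', which is exactly your $\min\le 0\le\max$ argument; your parenthetical remark on the compatibility $b=1$ is extra but harmless.
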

\begin{proof}
Since $ \abs{\phi_t} $ is bounded and
$$
(\Omega+\partial\partial_J\varphi)^n={\rm e}^{F+\varphi_t}\,\Omega^n\,,
$$
for every fixed $t$, $\varphi(\cdot,t)$ solves the quaternionic Monge-Amp\`ere equation \eqref{QMAe} with datum $F+\varphi_t$. In view of the $C^0$ estimate for solutions to the quaternionic Monge-Amp\`ere equation 
\cite{Alesker-Shelukhin (2017),Alesker-Verbitsky (2010),Sroka}, $\varphi$ satisfies the bound
\begin{equation}
\label{osc}
\max_M \varphi-\min_M \varphi\leq C\,,
\end{equation}
where $C$ depends only on  $(M,I,J,K,g)$ and an upper bound of $\max |F+\varphi_t|$. Therefore Lemma \ref{varphit} implies that the constant $C$ in \eqref{osc} may be chosen so that it only depends on  $(M,I,J,K,g)$ and an upper bound of $\max |F|. $
Now, let $ (x,t)\in M\times [0,T)$, since $ \tilde \phi $ is normalized, there exist $(y,t)$ such that $ \tilde \phi(y,t)=0 $, and thus we obtain $ |\tilde \phi(x,t)|=|\tilde \phi(x,t)-\tilde \phi(y,t)|=|\phi(x,t)-\phi(y,t)|\leq C $ and the claim follows. 
\end{proof}

%We denote  by $\Lambda^{2,0}_{\R}$ the set of $q$-real $(2,0)$-forms on $M$. For   a form $\beta\in \Lambda^{2,0}_{\R}$ and real vector filed $X$ we have that 
%$\beta(X,JX)$ is real. We say that $\beta\in \Lambda^{2,0}_{\R}$ is positive if $\beta(X,JX)>0$ for every nowhere vanishing real vector filed $X$. For $\beta$ and $\gamma\in\Lambda^{2,0}_{\R}$ we write $\beta>\gamma$ if $\beta-\gamma>0$. 

\begin{lem}\label{laplacian_estimate}
Assume that $(I,J,K)$ is locally flat and that there exists a hyperk\"ahler  metric $\hat g$ on $(M,I,J,K)$. Then
$$
\Delta_{\hat g}\varphi\leq C\,,
$$
for a uniform constant $C$. 
\end{lem}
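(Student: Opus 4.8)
The plan is to adapt the classical second-order (Laplacian) estimate for complex Monge-Ampère equations to this quaternionic setting, exploiting two special features: the hypercomplex structure is locally flat, so in a coordinate chart we may identify a neighbourhood with an open set of $\H^n$ with standard $(I_0,J_0,K_0)$; and the background hyperkähler metric $\hat g$ provides a reference with particularly good curvature properties. First I would work in a local chart where $(I,J,K)=(I_0,J_0,K_0)$ and invoke Lemma \ref{fond} to write the evolving HKT metric as $G_\varphi=\kappa\,{\rm Hess}_\H u$ for the local potential $u$ of $\Omega_\varphi$; similarly the hyperkähler metric corresponds to a potential whose quaternionic Hessian can be taken to be the identity at the point under consideration (by an affine change of quaternionic coordinates). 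The quantity to be estimated, $\Delta_{\hat g}\varphi$, is controlled once we bound the trace $\mathrm{tr}(\hat G^{-1}G_\varphi)$ from above, because of the second formula in Lemma \ref{formule} together with the $C^0$ bound on $\varphi$ from Lemma \ref{lemmatilde} and the $C^0$ bound on $\varphi_t$ from Lemma \ref{varphit}.

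The core of the argument is a maximum-principle computation for the quantity
$$
Q=\log\bigl(\mathrm{tr}_{\hat g}\,g_\varphi\bigr)-A\varphi
$$
(or, more conveniently in the parabolic setting, $Q(x,t)$ with an extra time-dependent normalization), where $A>0$ is a large constant to be chosen. At a space-time maximum of $Q$ one applies the evolution operator $\partial_t-n\Delta_\varphi$ and shows that the resulting inequality forces an upper bound on $\mathrm{tr}_{\hat g}\,g_\varphi$. The key ingredient making this work is Lemma \ref{Deltalogdet}, or rather its analogue for $\Delta_\varphi\log\det(\hat G^{-1}G_\varphi)$: differentiating $\log\det$ twice produces a ``good'' negative term of the form $-\frac{\kappa}{n}\sum \frac{1}{U_{ss}U_{tt}}|U_{st,x^r_i}|^2$ which, combined with the flatness of $\hat g$ (so that no curvature terms of $\hat g$ appear), dominates the ``bad'' third-order terms coming from differentiating the equation $\log\det(G_\varphi)-\log\det G=F+\varphi_t$. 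Here one uses that $\hat g$ being hyperkähler and the structure being locally flat means $\hat G$ can be taken constant ($=\mathbbm 1$) in suitable coordinates, so that $\Delta_{\hat g}$ genuinely is (a constant times) the flat quaternionic Laplacian $\sum_{r}\partial_{\bar q^r}\partial_{q^r}$, and the third derivatives of $F$ and of $\varphi_t$ enter only through the datum, which is bounded by Lemma \ref{varphit} (note $\varphi_t$ itself satisfies the linear heat-type equation $\varphi_{tt}=n\Delta_\varphi\varphi_t$, so its spatial derivatives can be controlled, or one absorbs the $\Delta_\varphi\varphi_t$ term into the good terms).

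Concretely the steps are: (1) reduce to a local flat chart and set up the test quantity $Q$; (2) at an interior space-time maximum, compute $(\partial_t-n\Delta_\varphi)Q$, using the equation differentiated once and twice and the Cauchy-Schwarz-type inequality for the third-order terms as in Lemma \ref{Deltalogdet}; (3) choose $A$ large enough (depending on $\sup|F|$, $\sup|\nabla^2 F|$, and the uniform constants from Lemmas \ref{varphit} and \ref{lemmatilde}) so that the $-A\,n\Delta_\varphi\varphi=-A\,\mathrm{tr}(g_\varphi^{-1}(\text{something}))\ge -A n + A\,\mathrm{tr}_{g_\varphi}\hat g\cdot c$ term produces, after using $\mathrm{tr}_{g_\varphi}\hat g\ge (\mathrm{tr}_{\hat g}g_\varphi)^{1/(n-1)}/(\det_{\hat g}g_\varphi)$ and the lower bound $\det_{\hat g}g_\varphi = c\,e^{F+\varphi_t}\ge c'>0$, a strictly negative contribution unless $\mathrm{tr}_{\hat g}g_\varphi$ is already bounded; (4) conclude a bound on $Q$, hence on $\mathrm{tr}_{\hat g}g_\varphi$ everywhere, hence on $\Delta_{\hat g}\varphi$. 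The main obstacle I anticipate is step (2)–(3): carefully matching the good third-order term of Lemma \ref{Deltalogdet} against the bad third-order terms produced by differentiating $\Delta_\varphi(\mathrm{tr}_{\hat g}g_\varphi)$ twice — this is the standard but delicate ``Yau–Aubin'' bookkeeping, and the quaternionic noncommutativity (the $\Re\,\mathrm{tr}$ in Lemma \ref{formule} and the behaviour of $\iota$) requires care to ensure the Cauchy–Schwarz cancellation genuinely goes through; that the flatness of both the hypercomplex structure and $\hat g$ kills all curvature correction terms is precisely what makes this feasible here, whereas it would fail for a general HKT background.
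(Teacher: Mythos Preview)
Your overall strategy matches the paper's: work in a flat quaternionic chart with $\hat G\equiv\mathbbm 1$ and $U={\rm Hess}_{\H}u$ diagonal at the space--time maximum, apply $\partial_t-\tfrac{n}{\kappa}\Delta_\varphi$ to a test function built from $T:=\Delta_{\hat g}u\sim\mathrm{tr}_{\hat g}g_\varphi$, and use Lemma~\ref{Deltalogdet} together with a Cauchy--Schwarz argument to dispose of the third-order terms. The gap is precisely in the choice of test function, and it is the obstacle you yourself flag at the end. For any concave $h$, the third-order contribution to $(\partial_t-\tfrac{n}{\kappa}\Delta_\varphi)h(T)$ at the maximum is
\[
-\,h'(T)\cdot\frac{\kappa}{n}\sum_{r,s,t,i}\frac{|u_{s\bar t,x^r_i}|^2}{u_{s\bar s}u_{t\bar t}}
\;+\;\bigl(-h''(T)\bigr)\sum_r\frac{|T_r|^2}{u_{r\bar r}},
\]
and the quaternionic Cauchy--Schwarz step (for which the paper invokes \cite[Proposition~3.1]{Alesker (2013)}) yields only
\[
\sum_r\frac{|T_r|^2}{u_{r\bar r}}\ \le\ 2T\cdot\frac{\kappa}{n}\sum_{r,s,t,i}\frac{|u_{s\bar t,x^r_i}|^2}{u_{s\bar s}u_{t\bar t}},
\]
with a factor $2$ that has no analogue in the complex K\"ahler case. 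Hence the third-order terms are non-positive only when $-h''(T)/h'(T)\le\tfrac{1}{2T}$. For your choice $h(T)=\log T$ one has $-h''/h'=\tfrac{1}{T}$, twice too large; the residual positive third-order term cannot be absorbed by $-A\varphi$, since that part contains no third derivatives of $u$.

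The paper (following Alesker's elliptic argument) takes instead
\[
Q=2\sqrt{\tfrac{1}{n}\,\mathrm{tr}_{\hat g}g_\varphi}-\varphi,
\]
i.e.\ $h(T)=2\sqrt{T}$, for which $-h''/h'=\tfrac{1}{2T}$ exactly, so the factor $2$ is absorbed and the third-order terms drop out with equality. No large $A$ is then needed; the coefficient of $-\varphi$ is simply $1$. After this, your remaining outline --- use $\Delta_\varphi\varphi=1-\tfrac{1}{\kappa}\Delta_\varphi v$ to extract $\sum_r 1/u_{r\bar r}$, bound $\prod_r u_{r\bar r}$ via the equation and Lemma~\ref{varphit}, and close up with the oscillation bound from Lemma~\ref{lemmatilde} --- is essentially the paper's proof. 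So the fix is localized: replace $\log(\cdot)$ by $2\sqrt{\,\cdot\,}$.
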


\begin{proof}
Let 
$$
Q=2\sqrt{\frac{1}{n}\mathrm{tr}_{ \hat g}g_{\varphi}}-\varphi\,.
$$
Fix $T'<T$ 
and let $(x_0,t_0)$ be a point where $Q$ achieves its maximum in $M\times [0,T']$. We may assume without loss of generality that $t_0>0$. Since $(I,J,K)$ is locally flat, then  in a neighborhood of $x_0$ we can locally identify $M$ with an open set $A$  of $\mathbb H^n$. Let $G$ and $\hat G$ be the hyperhermitian matrices in $A$ induced by $g$ and $\hat g$ respectively. We may further assume that $G={\rm Hess}_{\mathbb H}v$ in $A$, that $\hat G$ is the identity in $A$ and that $U={\rm Hess}_{\mathbb H}(v+\kappa\varphi)$ is diagonal at $x_0$. Let $u=v+\kappa\varphi$. Then in $A$ we have 
$$ 
Q=2\sqrt{\Delta_{\hat g} u}-\varphi\,.
$$
Computing at $(x_0,t_0)$, we have 
$$
\Delta_{\varphi} Q=\frac{\kappa}{n}
\frac{1}{\sqrt{\Delta_{\hat g} u}} \sum_{r=1}^n \frac{1}{u_{r\bar r}}\left(-\frac12 \frac{1}{\Delta_{\hat g} u}|\Delta_{\hat g}\,u_r|^2+\Delta_{\hat g} u_{r	\bar r}\right)-\Delta_{\varphi}\varphi
$$
and, applying Lemma \ref{formule} and Lemma \ref{Deltalogdet} we infer
$$
\begin{aligned}
\partial_tQ=&\,\frac{1}{\sqrt{\Delta_{\hat g} u}}\Delta_{\hat g}  \varphi_t-\varphi_t=
\frac{1}{\sqrt{\Delta_{\hat g} u}}\Delta_{\hat g}  (\log \det (U)-\log \det (G)-F)-\varphi_t\\
=&\,
\frac{1}{\sqrt{\Delta_{\hat g} u}}
\left(-\frac{\kappa}{n}\sum_{r,s,t=1}^n\sum_{i=0}^3 \frac{1}{u_{s \bar s}} \frac{1}{u_{t \bar t}} |u_{s \bar t,x^r_i} |^2  +\sum_{r=1}^n\frac{1}{u_{r \bar r}} \Delta_{\hat g} u_{r \bar r}-\Delta_{\hat g}\log \det (G) -\Delta_{\hat g} F\right)-
\varphi_t
\end{aligned}
$$
which implies 
\begin{multline*}
\partial_tQ-\frac{n}{\kappa}\Delta_{\varphi} Q=\\
\frac{1}{\sqrt{\Delta_{\hat g} u}}
\left(\frac{1}{2\Delta_{\hat g} u} \sum_{r=1}^n\frac{1}{u_{r\bar r}}|\Delta_{\hat g}\,u_r|^2-\frac{\kappa}{n}\sum_{r,s,t=1}^n\sum_{i=0}^3 \frac{1}{u_{s \bar s}} \frac{1}{u_{t \bar t}} |u_{s \bar t,x^r_i} |^2 -\Delta_{\hat g} (F+\log\det(G))\right)+\frac{n}{\kappa}\Delta_{\varphi}\varphi
-\varphi_t\,.
\end{multline*}
Using the Cauchy-Schwarz inequality and \cite[Proposition 3.1]{Alesker (2013)} we obtain
\[
\begin{aligned}
\sum_{r=1}^n\frac{1}{u_{r\bar r}}|\Delta_{\hat g} u_{r}|^2 & =\sum_{r=1}^n\sum_{i=0}^3\frac{1}{u_{r\bar r}}(\Delta_{\hat g} u_{x^r_i})^2=\frac{\kappa^2}{n^2}\sum_{r=1}^n\sum_{i=0}^3\frac{1}{u_{r\bar r}}\left(\sum_{s=1}^n\frac{\sqrt{u_{s\bar{s}}}}{\sqrt{u_{s\bar{s}}}}u_{s\bar{s},x^r_i}\right)^2\\
&\leq\frac{\kappa^2}{n^2}\sum_{r,s,t=1}^nu_{t\bar{t}}\sum_{i=0}^3\frac{1}{u_{r\bar r}} \frac{1}{u_{s\bar{s}}}(u_{s\bar{s},x^r_i})^2=\frac{\kappa}{n}\Delta_{\hat{g}} u\sum_{r,s=1}^n\sum_{i=0}^3\frac{1}{u_{r\bar r}} \frac{1}{u_{s\bar{s}}}(u_{s\bar{s},x^r_i})^2\\
&\leq 2\frac{\kappa}{n} \Delta_{\hat g} u \sum_{r,s,t=1}^n \sum_{i=0}^3 \frac{1}{u_{ s\bar s}} \frac{1}{u_{ t\bar t}} | u_{s\bar t, x^r_i} |^2
\end{aligned}
\]
i.e.
$$
\frac{1}{2\Delta_{\hat g} u}\sum_{r=1}^n\frac{1}{u_{r\bar r}}|\Delta_{\hat g} u_{r}|^2\leq \frac{\kappa}{n}\sum_{r,s,t=1}^n\sum_{i=0}^3 \frac{1}{u_{s \bar s}} \frac{1}{u_{t \bar t}} |u_{ s \bar t,x^r_i} |^2 \,,
$$
from which it follows
$$
0\leq \partial_tQ-\frac{n}{\kappa}\Delta_{\varphi} Q\leq \frac{n}{\kappa}\Delta_{\varphi}\varphi-\frac{\Delta_{\hat g} (F+\log\det(G))}{\sqrt{\Delta_{\hat g} u}}-\varphi_t\leq 
\frac{n}{\kappa}-\frac{n}{\kappa^2}\Delta_{\varphi}v-\frac{\Delta_{\hat g} (F+\log\det(G))}{\sqrt{\Delta_{\hat g} u}}-\varphi_t
$$
at $(x_0,t_0)$, 
where we have used that it is a maximum point as well as the relation 
$$
\Delta_{\varphi}\varphi=1-\frac{1}{\kappa}\Delta_{\varphi}v\,. 
$$
Hence
$$
 \frac{n}{\kappa^2}\Delta_{\varphi}v \leq \frac{n}{\kappa}- \frac{\Delta_{\hat g} (F+\log\det(G))}{\sqrt{\Delta_{\hat g} u}}-\varphi_t
$$
at $(x_0,t_0)$. Since $|\varphi_t|$ is uniformly bounded we obtain 
\begin{equation}\label{boh2} 
 \Delta_{\varphi}v(x_0,t_0) \leq C + \frac{C}{\sqrt{\sum_{r=1}^n u_{r\bar r}(x_0,t_0)}}
\end{equation}
for a uniform constant $C$. 
In terms of $u$ and $G$ equation \eqref{QPMAe} writes as
$$
\frac{1}{\kappa}u_t=\log\det\left(U \right)-\log\det (G)-F
$$
and then 
$$
\frac{1}{\kappa}u_t(x_0,t_0)=\log \prod_{r=1}^{n} u_{r\bar r}(x_0,t_0)-\log \det (G(x_0))-F(x_0)\,.
$$
Lemma \ref{varphit} implies that $|u_t|$ is uniformly bounded and we deduce that 
$$\frac{1}{C} \leq \prod_{r=1}^{n}u_{r\bar r}(x_0,t_0) \leq C\,.$$ 
Thus in particular by the geometric-arithmetic mean inequality we have  $\sum_{r=1}^{n}u_{r\bar r}(x_0,t_0) \geq C$.\\
Since 
$$
 \Delta_{\varphi}v(x_0,t_0)=\frac{\kappa}{n}\sum_{r=1}^n\frac{1}{u_{r\bar r}(x_0,t_0)}v_{r\bar r }(x_0)\,,
$$ 
by \eqref{boh2} we finally deduce 
$$
\sum_{r=1}^n\frac{1}{u_{r\bar r}(x_0,t_0)}\leq C\,.
$$

Therefore
$$
\Delta_{\hat g}u(x_0,t_0)=\frac{\kappa}{n}\sum_{r=1}^n u_{r\bar r}(x_0,t_0)\leq \frac{\kappa}{n} \frac{1}{(n-1)!}
\left(
\sum_{r=1}^n\frac{1}{u_{r\bar r}(x_0,t_0)}
\right)^{n-1}\prod_{r=1}^{n}u_{r\bar r}(x_0,t_0)\leq C\,.
$$
%i.e. 
%$$
%\Delta_{\hat g} u(x_0,t_0)\leq C\,.
%$$
It follows  
$$
2\sqrt{\Delta_{\hat g} u(x,t)}\leq C+\varphi(x,t)-\varphi(x_0,t_0)\leq C+\mathrm{osc}\,\varphi\qquad \mbox{ in } M\times [0,T']\,,
$$
from which, using Lemma \ref{lemmatilde}, we get 
$$
\Delta_{\hat g} u\leq C
$$
for a uniform $C$ and the claim is proved.  
\end{proof}

\begin{lem}
\label{2nd_der}
Assume that  $(I,J,K)$ is locally flat and that there exists a hyperhermitian metric $\hat g$ on $(M,I,J,K)$ such that 
$$
\Delta_{\hat g}\varphi\leq C
$$
for a uniform constant $C$. Then for $0<\alpha<1$  we have 
$$
\|\nabla^2\varphi\|_{C^{\alpha}}\leq C 
$$  
for a uniform constant $C$. 
\end{lem}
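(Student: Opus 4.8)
The plan is to reduce the statement to a local interior second-order Hölder estimate for a single fully nonlinear parabolic equation and then to quote the general result of Chu \cite{Chu}. Since the estimate is local, it suffices to bound the Euclidean Hessian $D^2\varphi$ in the parabolic $C^\alpha$-norm on a fixed, slightly smaller, space--time box inside each chart of a finite atlas, uniformly. First I would use local flatness to cover $M$ by finitely many open sets, each identified with an open set $A\subset\mathbb{H}^n$ carrying the standard hypercomplex structure $(I_0,J_0,K_0)$. On such a chart we may assume $G=\mathrm{Hess}_{\mathbb{H}}v$ for a smooth real potential $v$ of $\Omega$ (Lemma~\ref{fond}), and then, by Lemma~\ref{formule} and the identity $\det\iota(\,\cdot\,)=(\det\,\cdot\,)^4$, the flow \eqref{QPMAe} rewrites on $A\times[0,T)$ as
$$
\varphi_t=\tfrac14\log\det\bigl(\iota(G)+\kappa\,\iota(\mathrm{Hess}_{\mathbb{H}}\varphi)\bigr)-\tfrac14\log\det\iota(G)-F .
$$
The right-hand side $f$ is a constant multiple of $\log\det$ of a positive-definite symmetric matrix which is affine in $D^2\varphi$, so $f$ is concave in $D^2\varphi$; but $\iota(\mathrm{Hess}_{\mathbb{H}}\varphi)$ is a fixed linear function of $D^2\varphi$ whose image is the \emph{proper} subspace $\mathrm{Hyp}(n,\mathbb{R})\subset\mathrm{Sym}(4n,\mathbb{R})$, so the equation depends on $D^2\varphi$ only through its ``quaternionic part'' and is degenerate.

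Next I would establish uniform parabolicity in the quaternionic directions. Put $U=G+\kappa\,\mathrm{Hess}_{\mathbb{H}}\varphi$. Since $\varphi(\cdot,t)\in\mathcal{H}_\Omega$ we have $U>0$, hence $\iota(U)>0$. From the hypothesis $\Delta_{\hat g}\varphi\le C$, the positivity of $U$, and the fact that $G$ and $\hat G$ are fixed smooth positive-definite matrix fields on $A$, one deduces $\mathrm{tr}\,\mathrm{Hess}_{\mathbb{H}}\varphi\le C$, hence $\mathrm{tr}\,\iota(U)\le C$ and $\lambda_{\max}(\iota(U))\le C$. On the other hand the equation reads $\varphi_t=\log\det U-\log\det G-F$, so Lemma~\ref{varphit} gives $|\log\det U|\le C$ and thus $\det\iota(U)=(\det U)^4\in[1/C,C]$; together with the bound on $\lambda_{\max}$ this forces $\lambda_{\min}(\iota(U))\ge 1/C$. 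Hence
$$
\tfrac1C\,\mathbbm{1}_{4n}\le\iota(U)\le C\,\mathbbm{1}_{4n}\qquad\text{on }A\times[0,T),
$$
and, $\iota(U)$ being affine in $D^2\varphi$, the linearized operator is uniformly parabolic on $\mathrm{Hyp}(n,\mathbb{R})$; moreover $\|\varphi\|_{C^0}\le C$ by Lemma~\ref{lemmatilde} and the coefficients $G,\hat G,F$ are fixed and smooth. (Note that only local flatness is used here, the background hyperkähler metric entering solely through the already-assumed bound $\Delta_{\hat g}\varphi\le C$.)

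Then I would invoke the general interior Hölder estimate for second derivatives of solutions to such (concave, degenerate, uniformly parabolic in the quaternionic directions) equations from \cite{Chu}, which outputs $\|D^2\varphi\|_{C^\alpha}\le C$ on the interior of the box — in particular also the full $C^2$ bound $\|D^2\varphi\|_{C^0}\le C$ — for some $\alpha\in(0,1)$ and a uniform $C$ depending only on the quantities above. Finally, since $\Delta_{\hat g}\varphi$ is bounded above (hypothesis) and below (because $U>0$), the Euclidean Laplacian of $\varphi$ is bounded, so $\|\varphi\|_{C^{1,\beta}}\le C$ for every $\beta<1$ by $W^{2,p}$ theory applied for fixed $t$; this lets me pass from the coordinate Hessian to the covariant Hessian $\nabla^2\varphi$ after possibly shrinking $\alpha$. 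Patching the finitely many charts, together with the fact that $\varphi$ is smooth with controlled norms for $t$ in a uniform short interval near $0$ (short-time estimates, since $\varphi(\cdot,0)=0$), yields $\|\nabla^2\varphi\|_{C^\alpha}\le C$.

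The hard part will be the invocation of \cite{Chu}. The classical Evans--Krylov argument only provides Hölder control on the part of $D^2\varphi$ that actually enters the operator, leaving the (many) complementary second derivatives uncontrolled — indeed even a pointwise bound on the full Hessian is not a formal consequence of the quaternionic Hessian being pinched. To reach the full Hessian one must exploit the precise algebraic structure of the quaternionic Hessian operator together with the trace bound $\Delta_{\hat g}\varphi\le C$ and the equation itself, and this is exactly what Chu's general result packages. Thus the real task is to check that \eqref{QPMAe}, after the local reduction and the parabolicity and concavity verifications above, fits the framework of \cite{Chu}.
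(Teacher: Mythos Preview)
Your approach is essentially the paper's: pass to local quaternionic coordinates via local flatness, rewrite \eqref{QPMAe} as $u_t=\Phi\bigl(\mathrm{p}(\mathrm{Hess}_{\mathbb R}u)\bigr)+\text{(lower order)}$ where $\mathrm{p}$ is the projection onto $\mathrm{Hyp}(n,\mathbb R)$, establish the eigenvalue pinching $C^{-1}\mathbbm{1}_{4n}\le \iota(U)\le C\mathbbm{1}_{4n}$ from the hypothesis $\Delta_{\hat g}\varphi\le C$ together with the determinant bound coming from the equation and Lemma~\ref{varphit}, and then invoke \cite[Theorem~5.1]{Chu}. The paper makes the projection $\mathrm{p}$ and its structural properties (linearity, $N\ge 0\Rightarrow \mathrm{p}(N)\ge 0$, and $C^{-1}\|N\|\le\|\mathrm{p}(N)\|\le C\|N\|$) explicit, since these are exactly the hypotheses Chu's framework demands; you correctly flag this verification as the crux but leave it implicit.

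One genuine slip: you assert $\|\varphi\|_{C^0}\le C$ ``by Lemma~\ref{lemmatilde}'', but that lemma only controls $\mathrm{osc}\,\varphi$ and $|\tilde\varphi|$, not $|\varphi|$ --- indeed $\varphi$ may drift linearly in $t$ (and at this stage $T$ could be infinite). Chu's interior estimate needs a $C^0$ bound on the solution over the parabolic cylinder in question. The paper repairs this by a sliding-window argument: when $T\ge 1$, for each $a\in(0,T-1)$ set $\hat u(x,t)=u(x,t+a)-\inf_{B\times[a,a+1)}u$, so that $\sup|\hat u|\le C$ by the oscillation bound, apply \cite[Theorem~5.1]{Chu} on $B\times[\epsilon,1)$, and note the resulting constant is independent of $a$; the case $T<1$ is handled directly since then $|\varphi|\le CT+C$ by Lemma~\ref{varphit}. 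With this correction your outline matches the paper's proof.
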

\begin{proof}
We prove the result by applying \cite[Theorem 5.1]{Chu}.  
First we state some algebraic preliminaries (which are analogous to the complex case \cite[Section 2]{Tosatti et al.}). Note that, in the notation introduced in section \ref{preliminaries}, the real representation $\iota \colon\mathbb H^{n,n}\to \{H\in \R^{4n,4n}\,:I_0HI_0=J_0HJ_0=K_0HK_0=-H\,\}$ of quaternionic matrices is monotonic in the sense that when $H_1,H_2$ are hyperhermitian one has
$$
H_1\leq H_2\Rightarrow \iota(H_1)\leq \iota(H_2)\,,
$$
where $ H_1 \leq H_2 $ means that all the eigenvalues of $H_2-H_1$ are non-negative.

Let ${\rm p}\colon \R^{4n,4n}\to \{H\in \R^{4n,4n}\,:I_0HI_0=J_0HJ_0=K_0HK_0=-H\,\}$ be the projection defined as 
$$
{\rm p}(N):=\frac14(N-I_0NI_0-J_0NJ_0-K_0NK_0)\,.
$$
Then for any real valued smooth function $f$ and any hyperhermitian matrix $ H $ we have
$$
\iota({\rm Hess}_\mathbb{H}f)=4 {\rm p}({\rm Hess}_\mathbb{R}f)\,,\quad \det(\iota (H))=(\det H)^4\,.
$$
%where we recall that $\det H$ is the Moore determinant of the hyperhermitian matrix $H$.

%Following the approach of the proof of Lemma \ref{laplacian_estimate},
Thus, once local quaternionic coordinates are fixed, working as in the proof of Lemma \ref{laplacian_estimate}, we can rewrite equation \eqref{QPMAe} as 
%$$
%u_t=\log \det({\rm Hess}_\mathbb{H}u)-\log \det(G)-F\,.
%$$
%which can equally be rewritten as 
$$
\frac{1}{\kappa}u_t=\frac14 \log \det\left(4{\rm p}({\rm Hess}_\mathbb{R}u)\right)-\log \det(G)-F\,,
$$
%from which it follows 
%$$
%u_t=\frac14 \log \det(4{\rm p}({\rm Hess}_\mathbb{R}u))-\log \det(G)-F
%$$
where  $u=\kappa v+\kappa\varphi$ and $v$ is a HKT potential of $\Omega$.
We rewrite the last equation as 
\begin{equation}\label{eqnchun-li}
\frac{1}{\kappa}u_t=\Phi({\rm p}({\rm Hess}_\mathbb{R}u))-\log \det(G)-F
\end{equation}
where  for  $N\in{\rm Sym}(4n,\R)$ such that $\det N>0$ we set 
$$
\Phi(N)=\frac14 \log \det \left(4 N\right)\,.
$$
Fix positive constants $C_1<C_2$ and let 
%Let 
%Now fix quaternionic coordinates around $p_0$ such that both $\hat G$ and 
%${\rm Hess}_{\mathbb H}u$ are diagonal. Lemma \ref{} implies 
%$$
%\sum_r G_{rr}^{-1} u_{r\bar r}\leq C
%$$
%from which we infer 
%$$
%{\rm Hess}_{\mathbb H}u\leq C \Lambda{\rm I}
%$$
%On the other hand, lemma \eqref{} implies 
%$$
%\left(\prod_{s}G_{ss}\right)^{-1}\left(\prod_{r}u_{rr}\right)\geq {\rm e}^{-C+F}
%$$
%
%$$
%\left(\prod_{s}G_{ss}\right)\left(\prod_{r}u_{rr}\right)^{-1}\leq {\rm e}^{C-F}
%$$
%
%$$
%\sum_{r}\frac{G_{r\bar r}}{u_{r\bar r}}\leq \frac{1}{(n-1)!}\frac{\prod G_{s\bar s}}{\prod u_{r\bar r}}\left(\sum \frac{u_{r\bar r}}{G_{r	\bar r}}\right)^{n-1}\leq \frac{1}{(n-1)!}{\rm e}^{C-F}C^{n-1}
%$$
%form which it follows 
%
%
$$
\mathcal{E}:=\left\{N\in {\rm Sym}(4n,\R)\,:\,C_1 {\mathbbm 1}_{4n}\leq N\leq C_2{\mathbbm 1}_{4n}\,\right\}\,.
$$
Then $\mathcal{E}$ is a compact convex subset of  ${\rm Sym}(4n,\R)$. 
We observe that $ \Phi $ and ${\rm p}$ satisfy the assumptions in \cite[Theorem 5.1]{Chu}.  Indeed
\begin{itemize}
\item $\Phi$ is uniformly elliptic in $\mathcal E$;

\vspace{0.1cm}
\item $\Phi$ is concave in $\mathcal E$;

\vspace{0.1cm}
\item ${\rm p}$ is linear;

\vspace{0.1cm}
\item if $N\geq 0$, then ${\rm p}(N)\geq 0$ and $C^{-1} \|N\|\leq \|{\rm p}(N)\|\leq C\|N\|$ for $C$ uniform.  
\end{itemize} 
Therefore, if we show that ${\rm p}({\rm Hess}_{\R}u)\in \mathcal{E}$ for a suitable choice of $C_1$ and $C_2$ equation \eqref{eqnchun-li} belongs to the class of equations considered in \cite[Theorem 5.1]{Chu}. 

\medskip
Without loss of generality we can fix $x_0\in M$ and assume that $\hat G$ is the identity at $ x_0 $. Our assumption $ \Delta_{\hat g} \phi \leq C $ implies
\begin{equation}\label{bound}
\sum_{r=1}^n u_{r\bar r}\leq C
\end{equation}
at $ x_0 $ for a uniform $ C>0 $ and thus
$$
{\rm Hess}_{\mathbb H}u\leq C {\mathbbm 1}_n\,.
$$
On the other hand,  equation \eqref{QPMAe} writes as
$$ 
\frac{1}{\kappa}u_t=\log \det({\rm Hess}_{\mathbb H}u)-\log \det(G)-F\,.
$$
Thus by Lemma \ref{varphit}
$$ 
\prod_{i=1}^n \lambda_i=\det({\rm Hess}_{\mathbb H}u)\geq \det(G)\mathrm{e}^{-\frac{1}{\kappa} |u_t|+F} \geq C\,,
$$
where $ \lambda_1,\dots,\lambda_n $ are the eigenvalues of $ {\rm Hess}_{\mathbb H}u$ and $ C>0 $ is a uniform constant. From \eqref{bound} we also infer $ \sum_{i=1}^n \lambda_i \leq C $ at $ x_0 $ which then implies a uniform lower bound for each $ \lambda_i $ at the point $ x_0 $, but such bound does not depend on $ x_0 $.

Therefore 
$$
C_1{\mathbbm 1}_n\leq {\rm Hess}_{\mathbb H}u\leq C_2{\mathbbm 1}_n\,.
$$
By applying $\iota$ we get  
$$
C_1{\mathbbm1 }_{4n}\leq 4{\rm p}({\rm Hess}_{\mathbb R}u)\leq C_2{\mathbbm 1}_{4n}\,.
$$
Then we can work as in the proof of \cite[Lemma 6.1]{Chu2}.

We assume that the domain of $u$ is $B	\times [0,T)$ with $B$ diffeomorphic to the unit ball in $\R^{4n}$. If $T<1$, then Lemma \ref{varphit} implies 
$$
|u|\leq CT+C\leq C
$$
for a uniform $C$ and \cite[Theorem 5.1]{Chu} implies the result. If $T\geq 1$ we define, for any $ a\in (0,T-1) $
$$
\hat u(x,t): =u(x,t+a)-\inf_{B\times [a,a+1)} u(x,t)
$$
for all $ t\in [0,1) $. We immediately deduce
$$
\frac{1}{\kappa}\hat u_t=\log \det ({\rm Hess}_\H \hat u)- \log \det (G) -F\,, \qquad   \sup_{B\times [0,1)} | \hat u(x,t)| \leq C\,.
$$
Invoking again \cite[Theorem 5.1]{Chu}, chosen $ \epsilon\in (0,\frac{1}{2}) $ and $ \alpha \in (0,1) $ we have
$$
\|\nabla^2u\|_{C^{\alpha}(B\times[a+\epsilon,a+1))}=
\|\nabla^2\hat u\|_{C^{\alpha}(B\times [\epsilon,1))}\leq C
$$
where the constant $ C $ depends on $ \epsilon $ and $ \alpha $. As $ a $ was chosen arbitrarily in $ (0,T-1) $ we have
$$
\|\nabla^2 u\|_{C^{\alpha}(B\times[\epsilon,T))}\leq C\,,
$$
and the lemma follows.
\end{proof}

\begin{lem}\label{schauder}
Assume that there exists $0<\alpha<1$ such that
$$
\|\nabla^2\varphi\|_{C^{\alpha}}\leq C 
$$  
for a uniform constant $C$. 
Then $T=\infty$ and for every $k\geq 1$
$$
\|\nabla ^k\varphi\|_{C^{0}}\leq C 
$$  
for a uniform constant $C$. 
\end{lem}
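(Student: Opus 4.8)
The plan is a routine parabolic bootstrapping argument: once $\|\nabla^2\varphi\|_{C^{\alpha}}$ is controlled, the flow \eqref{QPMAe} is, up to harmless lower-order terms, a linear \emph{uniformly} parabolic equation with H\"older coefficients, so interior Schauder estimates upgrade its regularity one derivative at a time. The only genuine point of care is that all constants be uniform in $t$; this is handled exactly as in the proof of Lemma \ref{2nd_der}, namely by running the interior estimates on time-windows $[a,a+1)$ of fixed length and letting $a$ range over $[0,T-1)$.

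First I would record uniform parabolicity. In local quaternionic coordinates on a ball $B\subset\R^{4n}$ the flow reads
$$
\tfrac1\kappa\,u_t=\log\det\big(\mathrm{Hess}_\H u\big)-\log\det G-F
$$
with $u$ as in the proof of Lemma \ref{2nd_der}, where it is shown that $C_1\mathbbm{1}_n\le\mathrm{Hess}_\H u\le C_2\mathbbm{1}_n$ for uniform $0<C_1<C_2$. The linearization of the right-hand side of \eqref{QPMAe} is the operator $n\Delta_\varphi$, which by Lemma \ref{formule} has principal coefficients built from $(\mathrm{Hess}_\H u)^{-1}$; the pinching makes it uniformly elliptic, and the hypothesis $\|\nabla^2\varphi\|_{C^{\alpha}}\le C$ makes these coefficients bounded in $C^{\alpha}$, hence---using the equation and the $C^0$ bound on $u_t$ from Lemma \ref{varphit}---in the parabolic H\"older norm $C^{\alpha,\alpha/2}$.

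Next comes the bootstrap. If $T<1$ then $|u|\le C$ by Lemma \ref{varphit} as in the proof of Lemma \ref{2nd_der}, and one concludes directly; so assume $T\ge1$, fix $a\in[0,T-1)$ and set $\hat u(x,t)=u(x,t+a)$ on $B\times[0,1)$. Differentiating the flow equation in a fixed real coordinate $x^r_i$, the function $w=\partial_{x^r_i}\hat u$ solves the linear parabolic equation
$$
w_t=\kappa\,\Re\,\mathrm{tr}\big((\mathrm{Hess}_\H\hat u)^{-1}\mathrm{Hess}_\H w\big)-\kappa\,\partial_{x^r_i}\big(\log\det G+F\big),
$$
whose principal part is uniformly elliptic by the pinching $C_1\mathbbm{1}_n\le\mathrm{Hess}_\H\hat u\le C_2\mathbbm{1}_n$, whose coefficients are bounded in $C^{\alpha,\alpha/2}$, and whose inhomogeneity is smooth. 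Standard interior parabolic Schauder estimates give $\|w\|_{C^{2,\alpha}(B'\times[\epsilon,1))}\le C$ on a smaller ball $B'$ and for any $\epsilon\in(0,1)$, i.e.\ $\|\nabla^3\hat u\|_{C^{\alpha}}\le C$. The coefficients $(\mathrm{Hess}_\H\hat u)^{-1}$ are now bounded in $C^{1,\alpha}$, so differentiating once more and reapplying Schauder yields $\|\nabla^4\hat u\|_{C^{\alpha}}\le C$; iterating, $\|\nabla^k\hat u\|_{C^{0}(B'\times[\epsilon,1))}\le C_k$ for every $k$, with $C_k$ independent of $a$. Covering $M$ by finitely many coordinate charts and letting $a$ range over $[0,T-1)$ gives $\|\nabla^k\varphi\|_{C^{0}(M\times[1,T))}\le C_k$; on $M\times[0,1]$ the same bounds follow from the smoothness of the initial datum $\varphi(\cdot,0)=0$ together with short-time parabolic regularity. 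Hence $\|\nabla^k\varphi\|_{C^{0}}\le C_k$ on $M\times[0,T)$ for every $k\ge1$.

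Finally, to get $T=\infty$, suppose $T<\infty$. The uniform $C^\infty$ bounds just obtained and Arzel\`a--Ascoli force $\varphi(\cdot,t)\to\varphi_T$ in $C^\infty(M)$ as $t\to T$, and $\varphi_T\in\mathcal H_\Omega$ since $g_{\varphi(\cdot,t)}$ stays uniformly positive definite by the pinching above. Solving \eqref{QPMAe} for a short time with initial datum $\varphi_T$---possible because the equation is strongly parabolic---extends the solution beyond $T$, contradicting maximality; thus $T=\infty$ and the estimates hold on $M\times[0,\infty)$. I do not expect a real obstacle here: the one delicate point, the $t$-uniformity of the Schauder constants, is precisely why one estimates on unit-length time windows and translates, exactly as in Lemma \ref{2nd_der}.
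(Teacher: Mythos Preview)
Your proposal is correct and follows exactly the same approach as the paper, which dispatches the lemma in two sentences by invoking ``Schauder theory and a standard bootstrapping argument'' together with the standard short-time extension argument for $T=\infty$. You have simply supplied the details the paper omits---the uniform parabolicity from the eigenvalue pinching, the differentiation of the flow to produce a linear equation, and the unit-time-window trick for $t$-uniform constants (borrowed verbatim from the proof of Lemma~\ref{2nd_der})---so there is nothing to compare.
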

\begin{proof}
Our assumptions imply that the spatial derivatives of $\varphi$ satisfy a uniformly parabolic equation and uniform bounds on $ \|\nabla ^k\varphi\|_{C^{0}} $ with $k\geq 1 $ follow by Schauder theory and a standard bootstrapping argument.

Now we shall prove the long-time existence. 
Assume by contradiction that the maximal time interval $[0,T)$ of existence of $\varphi$ is bounded. Then the achieved estimates and short-time existence would allow us to extend $ \phi $ past $ T $, which is a contradiction, thus $ T=\infty $.
\end{proof}

\begin{lem}\label{phong}
Assume $T=\infty$ and that $\|\nabla^k\varphi\|_{C^0}$ is uniformly bounded for every $k\geq 1$. Then 
$$
\tilde \varphi:=\varphi-\int_M \varphi \, \Omega^n \wedge \bar \Theta
$$
converges in $C^{\infty}$-topology to a smooth function $\tilde \varphi_\infty$. Moreover if  
$$
b:=\frac{\int_M\Omega^n\wedge \bar \Theta}{\int_M{\rm e}^F\Omega^n\wedge  \bar \Theta}\,,
$$ 
then $(\tilde\varphi_\infty,b)$ solves the 
quaternionic Monge-Amp\`ere equation \eqref{QMAe}.

\end{lem}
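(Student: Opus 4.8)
The plan is to follow the strategy of Phong--Sturm \cite{PhongSturm}, based on a monotone Mabuchi--type functional along the flow \eqref{QPMAe}. Set $\mu_t:=\Omega_\varphi^n\wedge\bar\Theta={\rm e}^{F+\varphi_t}\,\Omega^n\wedge\bar\Theta$. Since $\partial\Omega_\varphi=0$ (because $\partial\Omega=0$ and $\partial\partial\partial_J\varphi=0$) and $d\bar\Theta=0$, for every form $\gamma$ of type $(\bullet,0)$ with respect to $I$ one has $\partial\gamma\wedge\bar\Theta=d(\gamma\wedge\bar\Theta)$, and Stokes' theorem yields two facts: first, $\int_M\Omega_\psi^n\wedge\bar\Theta=\int_M\Omega^n\wedge\bar\Theta=:V$ for every $\psi\in\mathcal H_\Omega$, so $\mu_t$ has the fixed total mass $V$; second, using $\varphi_{tt}=n\Delta_\varphi\varphi_t$ (proof of Lemma \ref{varphit}),
\[
\int_M\varphi_{tt}\,\mu_t=n\int_M\partial\partial_J\varphi_t\wedge\Omega_\varphi^{n-1}\wedge\bar\Theta=0 .
\]
Consider $\mathcal M(t):=\int_M\varphi_t\,\mu_t$. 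As $\partial_t\mu_t=n\,\partial\partial_J\varphi_t\wedge\Omega_\varphi^{n-1}\wedge\bar\Theta=\varphi_{tt}\,\mu_t$, the displayed identity kills the $\varphi_{tt}$-contribution to $\mathcal M'(t)$; integrating the term that remains by parts (using $\partial(\Omega_\varphi^{n-1}\wedge\bar\Theta)=0$) and then applying \eqref{Lucio} gives
\[
\mathcal M'(t)=-\tfrac12\,\mathcal D(t),\qquad \mathcal D(t):=\int_M g_\varphi(\partial\varphi_t,\overline{\partial\varphi_t})\,\mu_t\ \ge\ 0 .
\]
Hence $\mathcal M$ is non-increasing and, being bounded (by Lemma \ref{varphit}, $|\varphi_t|\le C$, and $\mu_t$ has fixed mass $V$), it converges, so $\int_0^{\infty}\mathcal D(t)\,\de t<\infty$.

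Next I would upgrade this to decay of $\varphi_t$ modulo a function of $t$. Repeated differentiation of \eqref{QPMAe} in $t$, combined with $\varphi_{tt}=n\Delta_\varphi\varphi_t$ and the uniform estimates of Lemmas \ref{varphit} and \ref{schauder}, bounds all space--time derivatives of $\varphi$; in particular $|\mathcal D'(t)|\le C$, which together with $\mathcal D\ge 0$ and $\int_0^{\infty}\mathcal D\,\de t<\infty$ forces $\mathcal D(t)\to 0$. By the a priori estimates $g_\varphi$ is uniformly equivalent to $\hat g$ and $\mu_t$ to the Riemannian volume of $\hat g$, so the elliptic operator $\Delta_\varphi$, self-adjoint on $L^2(\mu_t)$ and annihilating constants, has first nonzero eigenvalue bounded below uniformly in $t$; thus $\int_M(\varphi_t-a(t))^2\,\mu_t\le C\,\mathcal D(t)\to 0$, where $a(t):=\tfrac1V\int_M\varphi_t\,\mu_t$. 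Interpolating this $L^2$-decay against the uniform bounds on $\|\nabla^k\varphi\|_{C^0}$ gives $\varphi_t-a(t)\to 0$ in $C^\infty$; in particular $\mathrm{osc}_M\,\varphi_t\to 0$, and ${\rm e}^{F+\varphi_t}-{\rm e}^{F+a(t)}\to 0$ in $C^\infty$.

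To finish, note that $\tilde\varphi(\cdot,t)$ is uniformly bounded in $C^\infty$ (Lemma \ref{lemmatilde} for the $C^0$-bound, Lemma \ref{schauder} for $\nabla^k\tilde\varphi=\nabla^k\varphi$ with $k\ge1$) and $\Omega_{\tilde\varphi}=\Omega_\varphi$ is uniformly two-sided bounded. Given any $t_j\to\infty$, after extracting a subsequence we have $\tilde\varphi(\cdot,t_j)\to\psi$ in $C^\infty$ with $\psi\in\mathcal H_\Omega$ and $a(t_j)\to\alpha$ for some $\alpha\in\R$, whence $(\Omega+\partial\partial_J\psi)^n={\rm e}^{\alpha+F}\,\Omega^n$. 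Wedging with $\bar\Theta$ and invoking $\int_M(\Omega+\partial\partial_J\psi)^n\wedge\bar\Theta=V$ forces ${\rm e}^{\alpha}=V/\int_M{\rm e}^{F}\,\Omega^n\wedge\bar\Theta=b$, \emph{independently of the subsequence}. Therefore every subsequential limit $\psi$ solves $(\Omega+\partial\partial_J\psi)^n=b\,{\rm e}^{F}\,\Omega^n$ and satisfies $\int_M\psi\,\Omega^n\wedge\bar\Theta=\lim_j\int_M\tilde\varphi(\cdot,t_j)\,\Omega^n\wedge\bar\Theta=0$; by the uniqueness theorem of \cite{Alesker-Verbitsky (2010)} (solutions of \eqref{QMAe} differ by an additive constant, here fixed by this normalization) all subsequential limits coincide, so $\tilde\varphi(\cdot,t)\to\tilde\varphi_\infty$ in $C^\infty$ and $(\tilde\varphi_\infty,b)$ solves \eqref{QMAe}.

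I expect the main obstacle to be the identity $\mathcal M'(t)=-\tfrac12\,\mathcal D(t)$: carrying out the two integrations by parts with $\partial$ and $\partial_J$ correctly and applying \eqref{Lucio} with the right signs, together with the verification that $\mathcal D'(t)$ is genuinely controlled by the estimates already established. Once $\mathcal D(t)\to 0$ is in hand, the rest is a soft compactness argument combined with the known uniqueness for \eqref{QMAe}.
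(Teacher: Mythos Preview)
Your proof follows the same Phong--Sturm strategy as the paper: define the Mabuchi-type energy $\mathcal M(t)=\int_M\varphi_t\,\Omega_\varphi^n\wedge\bar\Theta$, compute $\mathcal M'(t)=-\tfrac12\mathcal D(t)$ via \eqref{Lucio}, show $\mathcal D(t)\to 0$, extract subsequential limits by Ascoli--Arzel\`a, and invoke uniqueness for \eqref{QMAe} to conclude full convergence.

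There are two minor technical differences worth recording. First, to obtain $\mathcal D(t)\to 0$ the paper differentiates again and establishes the differential inequality $f''\geq Cf'$ (which requires a second round of integrations by parts using \eqref{Lucio}), whereas you instead argue that $|\mathcal D'(t)|\leq C$ directly from the hypothesis that all spatial derivatives of $\varphi$ are bounded (and hence, via the flow equation, all space--time derivatives); this is a cleaner shortcut and perfectly valid here. Second, after $\mathcal D(t)\to 0$ you insert a uniform Poincar\'e inequality for $\Delta_\varphi$ and an interpolation step to upgrade to $\varphi_t-a(t)\to 0$ in $C^\infty$. The paper bypasses this: since $\partial\tilde\varphi_t=\partial\varphi_t$, one can pass to the limit directly in $\int_M|\partial\varphi_t|^2_{g_\varphi}\,\Omega_\varphi^n\wedge\bar\Theta$ along the chosen subsequence to conclude that $\log(\Omega_{\tilde\varphi_\infty}^n/\Omega^n)-F$ is constant. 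Your route is correct but does a little more work than necessary. Your observation that integrating against $\bar\Theta$ forces $e^\alpha=b$ independently of the subsequence is exactly what is used (implicitly) to match the two subsequential constants before invoking uniqueness.
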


\begin{proof}
Let 
$$
f(t):=\int_M \varphi_t \,\Omega_{\varphi}^n\wedge \bar \Theta = \int_M\log \frac{\Omega_\varphi^n}{\Omega^n}\,\Omega_{\varphi}^n\wedge \bar \Theta-\int_{M} F\,\Omega_{\varphi}^n\wedge\bar\Theta\,.
$$
Using \eqref{Lucio} we have 
$$
\begin{aligned}
f'&=n\int_M \left(\Delta_{\varphi}\varphi_t+\log \frac{\Omega_\varphi^n}{\Omega^n}\Delta_{\varphi}\varphi_t-F\Delta_{\varphi}\varphi_t\right) \,\Omega_{\varphi}^n\wedge\bar\Theta
\\
&=n\int_M \varphi_t \Delta_{\varphi}\varphi_t\,\Omega_{\varphi}^n\wedge\bar\Theta
=n\int_M \varphi_t \partial\partial_J\varphi_t \wedge \Omega_{\varphi}^{n-1}\wedge\bar\Theta
=-n\int_M \partial \varphi_t \wedge \partial_J\varphi_t\,\wedge \Omega_{\varphi}^{n-1}\wedge\bar\Theta \\
& = - \frac{1}{2} \int_M |\partial\varphi_t|^2_{g_{\varphi}} \Omega_\phi^n\wedge \bar \Theta\,.
\end{aligned}
$$
Differentiating again we obtain 
$$
f''=- \frac12 \int_M \tfrac{\partial}{\partial t}|\partial\varphi_t|^2_{g_{\varphi}} \Omega_\phi^n\wedge \bar \Theta
-\frac{n}{2} \int_M |\partial\varphi_t|^2_{g_{\varphi}}\Delta_{\varphi}\varphi_t\, \Omega_\phi^n\wedge \bar \Theta\,.
$$
Now 
\begin{equation}
\label{der_norm}
\frac{\partial}{\partial t} |\partial\varphi_t|^2_{g_{\varphi}}=
-g_{\varphi}\left(\tfrac{\partial}{\partial t}g_{\varphi}, \partial \varphi_t\otimes\bar\partial\varphi_t\right)+2\Re\,g_{\varphi}(\partial\varphi_{tt},\bar \partial\varphi_{t})\,.
%\leq \|\tfrac{\partial}{\partial t}g\|_{g_\varphi}\,\|\partial \varphi_t\|_{g_\varphi}^2+2\Re\,g_{\varphi}(\partial\varphi_{tt},\bar \partial\varphi_{t})\,.
\end{equation}
For the first term of \eqref{der_norm} Cauchy-Schwarz inequality gives
$$
-g_{\varphi}\left(\tfrac{\partial}{\partial t}g_{\varphi}, \partial \varphi_t\otimes\bar\partial\varphi_t\right)
\leq |\tfrac{\partial}{\partial t}g_\varphi|_{g_\varphi}\,|\partial \varphi_t|_{g_\varphi}^2 \leq C\,|\partial \varphi_t|_{g_\varphi}^2
$$
because $\Omega_\varphi$ and $g_{\varphi}$ are related by \eqref{Omega_g} and $\Omega_{\varphi}$ and $\tfrac{\partial}{\partial t}\Omega_{\varphi}$ are uniformly bounded in $C^{k}$-norm for every $k$.\\
%$$
%\frac{\partial}{\partial t} |\partial\varphi_t|^2_{g_{\varphi}}\leq C\,\|\partial \varphi_t\|_{g_\varphi}^2+2n\Re\,g_{\varphi}(\Delta_\varphi\varphi_t,\bar \partial\varphi_{t})\,.
%$$
For the second term of \eqref{der_norm} using \eqref{Lucio} again we have
$$
\begin{aligned}
-n\Re\int_M g_{\varphi}(\partial\Delta_\varphi\varphi_t,\bar \partial\varphi_{t})  \Omega_\phi^n\wedge \bar \Theta
=- 2n^2 \Re\,\int_M \partial\Delta_\varphi\varphi_t \wedge \partial_J \varphi_{t} \wedge \Omega_\phi^{n-1}\wedge \bar \Theta\\
= 2n^2 \Re\,\int_M \Delta_\varphi\varphi_t \, \partial\partial_J \varphi_{t} \wedge \Omega_\phi^{n-1}\wedge \bar \Theta
= 2n^2 \int_M (\Delta_\varphi\varphi_t)^2\,  \Omega_\phi^{n}\wedge \bar \Theta
\end{aligned}
$$
therefore 
$$
f'' \geq -C\int_{M}|\partial \varphi_t|_{g_\varphi}^2\,\Omega_\phi^n\wedge \bar \Theta\,.
$$
%that is
%$$
%f''\geq C f'\, .
%$$

Thus we have a non increasing smooth function $f: [0,+\infty) \to \mathbb \R$ which is bounded from below and such that $f''(t) \geq C f'(t)$ for some positive constant $C$.
This implies that $\lim_{t \to +\infty} f'(t) = 0$, i.e.
\begin{equation}\label{convergence}
\lim_{t\to \infty} \int_M|\partial \varphi_t|^2_{g_\phi}\Omega_\varphi^n\wedge\bar\Theta=0\,.
\end{equation}

Now, $\tilde \varphi$ has a uniform $C^\infty$ bound and Ascoli-Arzel\`a theorem implies that there exists a sequence 
$\{t_k\}\subseteq \R$, $t_k\to \infty$ such that $\tilde \varphi(\cdot, t_k)$ converges to some $\tilde \varphi_\infty$ in $C^\infty$-topology. 
%Condition \eqref{convergence} translates to 
%$$
%\lim_{t\to \infty} \int_M|\partial \tilde \varphi_t|^2_{g_{\tilde \varphi}}\Omega_{\tilde\varphi}^n\wedge\bar\Theta=0\,.	
%$$
Since 
$$
\tilde \varphi_{t}=\log\frac{\Omega_{\tilde \varphi}^n}{\Omega^n}-F-\int_M \left(\log\frac{\Omega_{\tilde \varphi}^n}{\Omega^n}-F\right)\Omega^n\wedge \bar \Theta\,,
$$
by \eqref{convergence} we get
$$ 
0= \lim_{t\to \infty} \int_M |\partial \tilde \varphi_t|^2_{g_{\tilde \varphi}}\Omega_{\tilde\varphi}^n\wedge\bar\Theta = \int_M \left|\partial\left(\log\frac{\Omega_{\tilde \varphi_\infty}^n}{\Omega^n}-F\right)\right|^2_{g_{\tilde \varphi}}\Omega_{\tilde \varphi_\infty}^n
\wedge \bar\Theta \,.
$$
It follows that
$$
\log\frac{\Omega_{\tilde \varphi_\infty}^n}{\Omega^n}-F=C
$$ 
for some constant $C$, so that 
$$
\Omega_{\tilde \varphi_\infty}^n={\rm e}^{F+C}\Omega^n\,.
$$
This means that $(\tilde \varphi_\infty,{\rm e}^C)$ solves the quaternionic Calabi-Yau equation. Finally, we prove that $\lim_{t\to \infty}\tilde\varphi=\tilde\varphi_\infty$. Assume by contradiction that there exists $\epsilon>0$ and a sequence $t_k\to \infty$ such  that 
$$
\|\tilde \varphi(\cdot, t_k)-\tilde \varphi_{\infty}\|_{C^{\infty}}\geq \epsilon
$$  
for every $t_k$. We may assume that $\tilde \varphi(\cdot, t_k)$ converges in $C^{\infty}$-topology to $\tilde \varphi_{\infty}'$. 
Hence 
$$
\Omega_{\tilde \varphi'_\infty}^n={\rm e}^{F+C'}\Omega^n\,.
$$
Since $\tilde \varphi_\infty$ and $\tilde \varphi'_\infty$ solve the same quaternionic Calabi-Yau equation, from uniqueness follows $\tilde \varphi_\infty=\tilde \varphi'_\infty$ and the lemma is proved. 
\end{proof}

\begin{proof}[Proof of Theorem $\ref{main}$]
We put together Lemmas \ref{varphit}--\ref{phong} proved in this section. Lemmas \ref{varphit},\ref{lemmatilde},\ref{laplacian_estimate} imply that if $\varphi$ solves \eqref{QPMAe}, its quaternionic Laplacian $\Delta_{\hat g}\varphi$ with respect to the background hyperk\"ahler metric $\hat g$ has a uniform upper bound. Hence Lemmas \ref{2nd_der} and \ref{schauder} can be applied and \eqref{QPMAe} has a long-time solution $\varphi$ such that $\|\nabla^k\varphi\|_{C^0}$ is bounded for every $k\geq 1$. Therefore, taking $\Theta=\Omega_{\hat g}^n$, Lemma \ref{phong} implies the last part of the statement. 
\end{proof}

\section{Further developments}

On a hypercomplex manifold $(M,I,J,K)$ with a HKT form $\Omega$ a $(1,0)$-form $\theta$ is defined by the relation
$$
\bar\partial \Omega^n=\bar\theta\wedge\Omega^n\,. 
$$
If the canonical bundle is holomorphically trivial then we can take $h\in C^{\infty}(M)$ such that $\partial J\bar \theta=\partial \partial_J h\,.$\\
Now the proof of Lemma \ref{phong} suggests to consider on a ${\rm SL}(n,\H)$-manifold with holomorphic q-real volume form $\Theta$ the operator $\mathcal M$ acting on HKT forms in the
$\partial\partial_J$-class of  a fixed HKT form $\Omega_0$ as 
$$
\mathcal M(\Omega_\varphi):=
\int_M\log \frac{\Omega_\varphi^n}{\Omega^n_0}\,\Omega_{\varphi}^n\wedge \bar \Theta-\int_{M} h\,\Omega_{\varphi}^n\wedge\bar\Theta\,.
$$ 
This is related to the following geometric flow of HKT forms 
\begin{equation}\label{QRF}
\partial_t\Omega=-\partial J\bar \theta\,,\quad \Omega(0)=\Omega_0\,.
\end{equation}
Indeed working as in the proof of Lemma \ref{phong} one can observe that $\mathcal M$ is decreasing along flow \eqref{QRF}, thus $\mathcal M$ plays a role similar to that of the Mabuchi functional in Calabi-Yau geometry. It is easy to prove that the gradient flow of $\mathcal M$ can  be expressed in terms of the quaternionic potential $\varphi$ as  
\begin{equation}\label{CF}
\varphi_t=\frac{J \bar \partial_J\bar\theta_\varphi\wedge \Omega_\varphi^{n-1}}{\Omega^n_\varphi}\,,\quad \varphi(0)=0
\end{equation}
and the fixed points of $\mathcal M$ are the HKT forms in the $\partial\partial_J$-class of $\Omega_0$ which are balanced, i.e. which induce a balanced metric. 

From this perspective we believe that the operator $\mathcal M$ and the flow \eqref{CF} could give new insights in the search of canonical HKT metrics and this will be the subject of a future work.

\end{document}